\newcommand{\bigoh}{{\mathcal O}}
\newcommand{\C}{{\mathcal C}}
\newcommand{\Complex}{{\mathbb C}}
\newcommand{\Dt}{{\Delta t}}
\newcommand{\R}{{\mathbb R}}
\theoremstyle{plain}
  \newtheorem{theorem}{Theorem}
  \newtheorem{lemma}[theorem]{Lemma}
  \newtheorem{corollary}[theorem]{Corollary}
\theoremstyle{definition}
  \newtheorem{remark}{Remark}
\DeclareMathOperator{\sech}{sech}
\title{Relaxation Runge-Kutta Methods: Conservation and stability for
Inner-Product Norms}
\date{}
\author{
David I. Ketcheson\thanks{Computer, Electrical, and Mathematical Sciences \& Engineering Division,
King Abdullah University of Science and Technology, 4700 KAUST, Thuwal
23955, Saudi Arabia. (david.ketcheson@kaust.edu.sa)}}
\begin{document}

\maketitle

\begin{abstract}
We further develop a simple modification of Runge--Kutta methods that guarantees
conservation or stability with respect to any inner-product norm.  The modified
methods can be explicit and retain the accuracy and stability properties of the
unmodified Runge--Kutta method.  We study the properties of the modified
methods and show their effectiveness through numerical examples, including
application to entropy-stability for first-order hyperbolic PDEs.
\end{abstract}

\sloppy

\section{Motivation and background}

Consider the initial value problem
\begin{subequations} \label{IVP}
\begin{align}
    u'(t) & = f(t,u(t)) \\
    u(t_0) & = u_0
\end{align}
\end{subequations}
where\footnote{We consider real spaces for simplicity, but the methods
developed here are also applicable in complex spaces.} $u:\R \to \R^m$ and $f:
\R\times\R^m \to \R^m$.  In this work we focus on problems that are dissipative
with respect to some inner-product norm:
\begin{align} \label{dissipativity}
\frac{d}{dt}\|u(t)\|^2 = 2\langle u, f(t,u) \rangle \le 0.
\end{align}
Here and throughout, $\langle \cdot, \cdot \rangle$
denotes an inner product and $\|\cdot\|$ the corresponding norm;
we will sometimes refer to $\|u\|^2$ as {\em energy}.
For dissipative problems, it is desirable that the numerical solution
mimic \eqref{dissipativity}:
\begin{align} \label{monotonicity}
    \|u^{n+1}\| \le \|u^n\|.
\end{align}
Herein, a method is called {\em monotonicity preserving}\/ if it guarantees
\eqref{monotonicity} for all problems satisfying \eqref{dissipativity}.

We say the problem \eqref{IVP} is {\em conservative} if
\begin{align} \label{conservative}
    \langle u, f(t,u) \rangle = 0.
\end{align}
For such problems it is desirable to discretely conserve energy:
\begin{align} \label{discrete_conservation}
    \|u^{n+1}\| = \|u^n\|.
\end{align}
A method is also called {\em conservative}\/ if it guarantees
\eqref{discrete_conservation} for all problems satisfying \eqref{conservative}.


In many applications, numerical conservation or monotonicity preservation
are of great importance.  Not only do these properties guarantee that the solution
remains bounded, but violation of these properties can lead to solutions
that are unphysical and qualitatively wrong.  Nevertheless, most numerical
methods do not enforce these properties exactly, but only up to truncation
errors. This is particularly true for explicit Runge--Kutta methods.
Even if one considers only linear autonomous systems of ODEs, it has been shown
that many explicit Runge--Kutta methods --- including the classical 4th-order
method --- are not even conditionally
monotonicity preserving; see \cite{tadmor2003entropy,sun2017,ranocha2018l_2,sun2018strong}.  
For general nonlinear autonomous systems, no methods of
order greater than one are known to be monotonicity preserving \cite{ranocha2018strong}.
For conservative problems, no explicit Runge--Kutta method of
any order can enforce discrete conservation, even for linear autonomous
problems.  Formulas for the production of a general convex entropy function
(which includes the special case of an inner-product norm) have been
derived in \cite{lozano2018entropy}.

Monotonicity preservation for Runge--Kutta methods with respect to an
inner-product norm was studied by
Higueras \cite{higueras2005}, with results closely connected to an earlier
study of contractivity preservation by Dahlquist \& Jeltsch\cite{dahlquist2006}.
However, to obtain results for explicit RK methods in that framework, it is
necessary to require strict inequality in \eqref{dissipativity},
making the results inapplicable for conservative problems or
for typical high-order semi-discretizations of hyperbolic PDEs.

Unconditionally conservative and monotonicity-preserving methods exist, for
instance, among the classes of implicit Runge--Kutta
and (for Hamiltonian systems) partitioned Runge--Kutta methods; see
\cite{hairer2006geometric} and references therein.

While classical explicit Runge--Kutta or linear multistep methods cannot
preserve general quadratic invariants,
some explicit energy-conservative methods have been developed
by going outside these traditional classes (as is done also in the
present work).
Indeed, it is possible to modify any Runge--Kutta method to preserve energy
or other first integrals
by a technique known as projection; see e.g. \cite{Grimm2005,calvo2010projection}.
In this approach, at the end of each time step the solution is
projected onto a desired set in order to ensure some property like
conservation or monotonicity.
The approach used in the current work can be viewed as a projection
method where the projection is performed along a direction corresponding
to the next time step update, but with an additional modification that the
size of the time step is also modified.  This approach was originally proposed
by Dekker \& Verwer \cite[pp. 265-266]{dekker1984}, who noted that the classical
four-stage RK method could be modified slightly to conserve energy
while maintaining its accuracy.
The idea was extended in \cite{delbuono2002explicit} to a restricted class of
fourth order methods.
This was developed further in \cite{calvo2006preservation}
by giving a general proof that applying the technique (without the step size
adjustment) to a RK method of order $p$ results in a method of order at least
$p-1$.  Subsequent development in \cite{calvo2006preservation,
calvo2010projection, laburta2015numerical} focuses on {\em embedded projection}
methods; i.e. methods that project in a direction determined by an embedded
Runge--Kutta pair.  The approach described in the present work could be
viewed as a variant in which the ``embedded'' method is simply the identity
map, but with an additional twist that requires reinterpretation of
the new step solution as an approximation at a slightly different time.

Like embedded projection methods, the methods proposed here are:
\begin{itemize}
    \item explicit
    \item conditionally conservative and monotonicity-preserving for general
            nonlinear ODEs
    \item arbitrarily high order accurate
    \item linearly covariant
\end{itemize}
Furthermore, they do not require partitioning or temporal staggering, and they
inherit other useful properties (such as strong stability preservation) of a
selected Runge--Kutta method.  Preservation of more general (non-inner-product)
functionals is also possible with modification similar to that described
herein; see \cite{paper2}.

The main contributions of the present work are: first, to further develop
these methods that, while not entirely new, seem to have been overlooked;
second, to put them on a rigorous footing in terms of accuracy and stability
properties; and third, to explore their properties through analysis and
numerical experiments.

\subsection{Energy evolution by Runge--Kutta methods}
A Runge--Kutta method applied to \eqref{IVP} takes the form
\begin{subequations} \label{RKM}
\begin{align}
    y_i & = u^n + \Dt \sum_{j=1}^s a_{ij} f(t_n+c_j \Dt, y_j) \label{stages} \\
    u(t_n + \Dt) \approx u^{n+1} & = u^n + \Dt \sum_{j=1}^s b_j f(t_n+c_j \Dt, y_j). \label{ynp1}
\end{align}
\end{subequations}
We make the usual assumption that $c_j = \sum_i a_{ij}$.
For convenience we introduce the shorthand
$$
f_i = f(t_n+c_i\Dt, y_i)
$$
for the $i$th stage derivative.  The change in energy from one step to the next is
\begin{align*}
\|u^{n+1}\|^2 - \|u^n\|^2 & = \left\|u^n + \Dt\sum_{j=1}^s b_j f_j \right\|^2 - \|u^n\|^2 \\
& = 2\Dt\sum_{j=1}^s b_j \langle u^n, f_j \rangle  + \Dt^2\sum_{i,j=1}^s b_i b_j \langle f_i, f_j \rangle \\
& = 2 \Dt\sum_{j=1}^s b_j \langle y_j, f_j \rangle - 2\Dt\sum_{j=1}^s b_j \langle u^n-y_j, f_j \rangle + \Dt^2\sum_{i,j=1}^s b_i b_j \langle f_i, f_j \rangle,
\end{align*}
which can be rewritten using \eqref{stages} as
\begin{align} \label{energy_update}
\|u^{n+1}\|^2 - \|u^n\|^2 & = 2\Dt\sum_{j=1}^s b_j \langle y_j, f_j \rangle - 2\Dt^2\sum_{i,j=1}^s b_i a_{ij} \langle f_j, f_i \rangle + \Dt^2\sum_{i,j=1}^s b_i b_j \langle f_i, f_j \rangle.
\end{align}
The first sum on the right side of \eqref{energy_update} is zero for
conservative systems, and it is negative for dissipative systems if $b_j\ge 0$
for all $j$.
However, the remaining two terms may lead to violation of the conservation
or monotonicity property.
Those two terms can be written together as the bilinear form
\begin{align} \label{mF}
    - \Dt^2 \sum_{i,j=1}^s m_{ij} \langle f_i, f_j \rangle,
\end{align}
where, letting $B$ denote the diagonal matrix with entries $b_j$,
$$
M = BA + A^T B - bb^T.
$$
This is the traditional analysis used in studying symplecticity, algebraic
stability, and other properties of RK methods
(see e.g. \cite{burrage1979stability,dekker1984,hairer2006geometric}).  If the matrix $M$
is positive semidefinite and the weights are
nonnegative, then the method is said to be algebraically stable.  Clearly, such methods are
unconditionally monotonicity-preserving.  If $M=0$, the method is said to be symplectic; clearly such methods
are unconditionally conservative.  Certain well-known implicit methods have these properties.
However, explicit methods cannot be algebraically stable or symplectic.

\section{Relaxation Runge--Kutta methods}
The relaxation version of the method \eqref{RKM} is obtained by replacing
\eqref{ynp1} with the update formula
\begin{align}
    u(t_n+\gamma_n \Dt) \approx u^{n+1}_\gamma & = u^n + \Dt \gamma_n \sum_{j=1}^s b_j f(t_n+c_j \Dt, y_j). \label{ynp1_gamma}
\end{align}
The only difference between \eqref{ynp1_gamma} and \eqref{ynp1} is the factor
$\gamma_n$ that multiplies the step size.
We can think of the original Runge--Kutta method \eqref{RKM} as determining only the
direction in which the solution will be updated, while the choice of $\gamma_n$
determines how far to step in that direction.  From this point of view
$\gamma_n$ is similar to the relaxation parameter used in some iterative
algebraic solvers, and for this reason we refer to these methods as relaxation
Runge--Kutta (RRK) methods.

With this change, \eqref{energy_update} becomes
\begin{align}
\begin{split} \label{energy_update_gamma}
\|u^{n+1}_\gamma\|^2 - \|u^n\|^2 = 2\gamma_n \Dt\sum_{j=1}^s b_j \langle y_j, f_j \rangle
        & - 2\gamma_n \Dt^2\sum_{i,j=1}^s b_i a_{ij} \langle f_j, f_i \rangle \\ & + \gamma_n^2 \Dt^2\sum_{i,j=1}^s b_i b_j \langle f_i, f_j \rangle.
\end{split}
\end{align}
We can eliminate the last two terms by setting
\begin{align} \label{gammadef}
\gamma_n = \frac{2 \sum_{i,j=1}^s b_i a_{ij} \langle f_i,f_j \rangle}{\sum_{i,j=1}^s b_i b_j \langle f_i, f_j \rangle},
\end{align}
so that
\begin{align*}
\|u^{n+1}_\gamma\|^2 - \|u^n\|^2 & = 2\gamma_n \Dt\sum_{j=1}^s b_j \langle y_j, f_j \rangle.
\end{align*}
In case the the denominator of \eqref{gammadef} vanishes, we have
$u^{n+1}=u^n$, so we can achieve conservation or monotonicity by taking simply
$\gamma_n = 1$.  We thus define in place of \eqref{gammadef}
\begin{align} \label{gammadefreal}
\gamma_n & = \begin{cases} 1 & \| \sum_{j=1}^s b_j f_j \|^2 = 0  \\
        \frac{2 \sum_{i,j=1}^s b_i a_{ij} \langle f_i,f_j
        \rangle}{\sum_{i,j=1}^s b_i b_j \langle f_i, f_j \rangle} & \|
        \sum_{j=1}^s b_j f_j \|^2 \ne 0.
        \end{cases}
\end{align}

Because we will interpret $u_{\gamma}^{n+1}$ as an approximation to the solution
at time $u(t_n+\gamma_n \Dt)$, it is important that $\gamma_n >0$.  It is straightforward
to show the following:
\begin{lemma} \label{posgamlem}
    Let $\sum b_i a_{ij} >0$, let $f$ be sufficiently smooth, and let $\gamma_n$
    be defined by \eqref{gammadefreal}.  Then $\gamma_n>0$ for sufficiently small $\Dt>0$.
\end{lemma}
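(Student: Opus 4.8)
The plan is to reduce the statement to the positivity of the numerator of $\gamma_n$ and then settle that by a leading-order expansion in $\Dt$. First I would dispose of the first branch of \eqref{gammadefreal}: whenever $\|\sum_j b_j f_j\|^2 = 0$ we have $\gamma_n = 1 > 0$ by definition, so there is nothing to prove. In the second branch the denominator $\sum_{i,j} b_i b_j\langle f_i,f_j\rangle = \|\sum_j b_j f_j\|^2$ is strictly positive, so $\gamma_n$ has the same sign as its numerator; it therefore suffices to prove that $\sum_{i,j} b_i a_{ij}\langle f_i,f_j\rangle > 0$ for all sufficiently small $\Dt > 0$.

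Next I would expand the stage derivatives about $\Dt = 0$. Writing $f_0 := f(t_n,u^n)$, the stage equations \eqref{stages} give $y_i = u^n + \bigoh(\Dt)$, and since $f$ is smooth each stage derivative satisfies $f_i = f(t_n+c_i\Dt,y_i) = f_0 + \bigoh(\Dt)$ uniformly in $i$; for explicit methods the $f_i$ are genuine smooth functions of $\Dt$ defined by the stage recursion, and in general one obtains the same expansion from the implicit function theorem. Consequently $\langle f_i,f_j\rangle = \|f_0\|^2 + \bigoh(\Dt)$ for every pair $(i,j)$, so that
\[
\sum_{i,j} b_i a_{ij}\langle f_i,f_j\rangle = \|f_0\|^2 \sum_{i,j} b_i a_{ij} + \bigoh(\Dt),
\]
and similarly the denominator equals $\|f_0\|^2\big(\sum_j b_j\big)^2 + \bigoh(\Dt)$.

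I would then conclude by continuity. Assume first $f_0 \neq 0$. The leading coefficient $\|f_0\|^2\sum_{i,j} b_i a_{ij}$ of the numerator is strictly positive precisely by the hypothesis $\sum_{i,j} b_i a_{ij} > 0$, so the numerator, being continuous in $\Dt$ with a strictly positive value at $\Dt = 0$, stays positive for all small enough $\Dt > 0$; combined with the positivity of the denominator this yields $\gamma_n > 0$. Using consistency ($\sum_j b_j = 1$) one even gets the quantitative limit $\gamma_n \to 2\sum_{i,j} b_i a_{ij}$ as $\Dt \to 0$, consistent with the interpretation of $\gamma_n$ as a relaxation parameter near $1$.

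The main obstacle is the non-generic configuration $f_0 = 0$, in which the leading terms of both numerator and denominator vanish and the argument above is inconclusive. There I would expand one order further: $f_0 = 0$ yields $f_i = c_i\Dt\,F + \bigoh(\Dt^2)$ with $F := f_t(t_n,u^n)$, whence the ratio tends instead to $2\sum_{i,j} b_i a_{ij}c_ic_j\big/\big(\sum_i b_i c_i\big)^2$ (using $\sum_{i,j} b_i b_j c_i c_j = (\sum_i b_i c_i)^2$), a quantity whose sign is no longer controlled by the hypothesis alone. I expect this to be the delicate point; the natural resolution is that a genuine fixed point ($f\equiv 0$) already falls into the $\gamma_n = 1$ branch, while an instantaneous zero of $f$ is exceptional, so that the conclusion holds for small $\Dt>0$ under the stated smoothness and sign assumptions.
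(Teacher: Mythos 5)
The paper never writes out a proof of Lemma~\ref{posgamlem} --- it is introduced with ``It is straightforward to show the following'' --- so the comparison here is against the argument the paper evidently intends, and your generic-case reasoning is precisely that argument. Reducing to the second branch of \eqref{gammadefreal}, noting the denominator is then strictly positive, expanding $f_i = f(t_n,u^n) + \bigoh(\Dt)$ via \eqref{stages} (with the implicit function theorem covering implicit methods), and concluding $\gamma_n \to 2\sum_{i,j} b_i a_{ij} = 2\,b^T c$, which is positive by hypothesis, is correct and complete when $f(t_n,u^n)\neq 0$. Your quantitative limit also dovetails with the paper's remark following the lemma: since $b^Tc = 1/2$ for any method of order at least two, the limit is $1$, consistent with Lemma~\ref{gammalem}, and the $\bigoh(\Dt)$ remainder is controlled by the Lipschitz constant of $f$, which is exactly the ``$\Dt \lesssim 1/L$'' heuristic stated there.

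Your final paragraph flags the right delicate point, and in fact the situation is sharper than you suspect: at a nonautonomous instantaneous zero the lemma as literally stated can \emph{fail}, so no amount of further expansion will rescue it there. Take $f(t,u)=t$ with $t_n=0$ and the explicit midpoint method ($c=(0,\tfrac12)$, $b=(0,1)$, $a_{21}=\tfrac12$): then $f_1 = 0$ and $f_2 = \Dt/2$ exactly, so the numerator of \eqref{gammadefreal} is $2\,b_2 a_{21}\langle f_2, f_1\rangle = 0$ while the denominator is $\Dt^2/4 > 0$, giving $\gamma_n = 0$ for \emph{every} $\Dt>0$; with three stages one can even arrange $b^T C A c < 0$ subject to $b^Tc=\tfrac12$, making $\gamma_n$ negative. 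So the lemma implicitly assumes $f(t_n,u^n)\neq 0$, and your proposed resolution is the correct reading: in the autonomous case $f(u^n)=0$ forces $f_j=0$ for all $j$ (by induction on the stages for explicit methods, by uniqueness from the implicit function theorem otherwise), so one lands in the $\gamma_n=1$ branch, while the nonautonomous instantaneous zero is a nongeneric configuration excluded from the intended scope. Your write-up is honest about this hole; given that the paper's own (omitted) proof certainly glosses over the same point, your treatment is at least as complete as the paper's, and the generic-case argument --- the actual content of the lemma --- is sound.
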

Note that the condition in Lemma \ref{posgamlem} holds for all methods of
order two or higher.

\begin{remark}
A more detailed analysis indicates that
{\em small enough} here means that $\Dt$ should be no more than about $1/L$,
where $L$ is the Lipschitz constant of $f$.  This is also the order of the
absolutely stable step size when using any explicit method, so one may expect
that using an absolutely stable step size will also yield $\gamma_n>0$.
\end{remark}

For conservative systems this gives exact energy conservation; for dissipative
systems it preserves dissipativity as long as all weights are non-negative.
\begin{theorem} \label{stabilitythm}
    Let $(A,b)$ be the coefficients of a Runge--Kutta method of order at least two.
    The corresponding relaxation Runge--Kutta method defined by \eqref{stages}
    and \eqref{ynp1_gamma} with $\gamma_n$ defined by \eqref{gammadefreal} is conservative.
    If the weights are nonnegative, then the relaxation method is also monotonicity preserving
    as long as $\Dt$ is chosen so that $\gamma_n \ge 0$.
\end{theorem}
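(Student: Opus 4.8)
The plan is to read the result directly off the energy-balance identity \eqref{energy_update_gamma} together with the defining choice \eqref{gammadefreal} of the relaxation parameter; no new estimates are needed, so the work lies in organizing the cases cleanly. First I would dispose of the degenerate branch of \eqref{gammadefreal}: if $\|\sum_j b_j f_j\|^2 = 0$, then the update \eqref{ynp1_gamma} gives $u^{n+1}_\gamma = u^n$ irrespective of $\gamma_n$, so both \eqref{discrete_conservation} and \eqref{monotonicity} hold trivially. This lets me assume throughout the remainder that the denominator in \eqref{gammadefreal} is nonzero.

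In the non-degenerate branch, $\gamma_n$ is by construction the value that makes the two $\Dt^2$ terms on the right-hand side of \eqref{energy_update_gamma} cancel. Substituting \eqref{gammadefreal} and carrying out this cancellation, the energy balance collapses to the single term
\[
\|u^{n+1}_\gamma\|^2 - \|u^n\|^2 = 2\gamma_n \Dt \sum_{j=1}^s b_j \langle y_j, f_j\rangle.
\]
This identity is the crux; everything else is a matter of signs. I would emphasize that the cancellation is purely algebraic and therefore holds for any Runge--Kutta method, so the order hypothesis is not used in deriving this identity — its role is only to guarantee, via Lemma \ref{posgamlem}, that the admissible regime $\gamma_n \ge 0$ is actually attained for small $\Dt$, so that the monotonicity statement is not vacuous (for a first-order method such as forward Euler the numerator of \eqref{gammadefreal} vanishes and the step degenerates).

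For the conservative case, the defining property \eqref{conservative} is applied at each stage value: since $f_j = f(t_n + c_j\Dt, y_j)$, we have $\langle y_j, f_j\rangle = 0$ for every $j$, so the right-hand side above vanishes and $\|u^{n+1}_\gamma\| = \|u^n\|$, which is \eqref{discrete_conservation}. Note this needs no restriction on the sign of $\gamma_n$ or on $\Dt$. For the dissipative case, the same stagewise evaluation of \eqref{dissipativity} gives $\langle y_j, f_j\rangle \le 0$ for each $j$. With $b_j \ge 0$, $\Dt > 0$, and the assumed $\gamma_n \ge 0$, every summand $b_j \langle y_j, f_j\rangle$ is nonpositive while the prefactor $2\gamma_n\Dt$ is nonnegative, so the right-hand side is $\le 0$ and \eqref{monotonicity} follows.

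The proof has no real obstacle: once the energy identity is reduced as above, the two conclusions are immediate sign checks. The only points requiring care are handling the degenerate branch first, and making explicit that the hypotheses \eqref{dissipativity}--\eqref{conservative} are invoked at the internal stage states $y_j$ rather than at $u^n$ — which is legitimate precisely because those hypotheses are assumed to hold for all arguments of $f$.
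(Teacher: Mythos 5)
Your proof is correct and takes essentially the same route as the paper, whose (informal, pre-theorem) argument is exactly the cancellation in \eqref{energy_update_gamma} via the choice \eqref{gammadefreal}, leaving $\|u^{n+1}_\gamma\|^2 - \|u^n\|^2 = 2\gamma_n \Dt \sum_{j} b_j \langle y_j, f_j\rangle$, followed by the same sign checks and the same disposal of the degenerate branch ($u^{n+1}_\gamma = u^n$ when the denominator vanishes). Your explicit observations --- that the identity is purely algebraic so the order-two hypothesis enters only through Lemma \ref{posgamlem} to make $\gamma_n \ge 0$ attainable, and that \eqref{conservative}--\eqref{dissipativity} must be invoked at the stage values $y_j$ --- are faithful to, and slightly more careful than, the paper's presentation.
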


\begin{remark}
  The relaxation RK method still conserves linear invariants, which is important
  for instance in the semi-discretization of hyperbolic conservation laws.
\end{remark}

\begin{remark}
    A version of Theorem \ref{stabilitythm} applicable only to conservative systems
    appears in \cite[Thm. 2.1]{calvo2006preservation}, along with a formula for
    $\gamma_n$ that is more computationally efficient (but correct only for
    conservative systems).
\end{remark}

\begin{remark} \label{ipremark}
    The denominator of the expression for $\gamma_n$ in \eqref{gammadef}
    is simply the norm of the step update, and can be computed with a
    single inner product.
    It has been pointed out by Hendrik Ranocha that
    by solving \eqref{energy_update} for the term that is the numerator of the expression
    for $\gamma_n$, it can be computed using only $s$ inner products \cite{ranocha_communication}.
    Thus $\gamma_n$ can be computed with just $s+1$ inner products.
\end{remark}

The update formula \eqref{ynp1_gamma} is equivalent to replacing the
coefficients $b_j$ in the Runge--Kutta method
with $\gamma_n b_j$.  It can also be viewed roughly as taking a Runge--Kutta step of size
$\gamma_n \Dt$ in place of $\Dt$, but notice that the original step size $\Dt$ is
still used in the computation of the stages $y_i$.  Both viewpoints
(rescaling $b$ and rescaling $\Dt$) will be useful in our analysis.

We will see below in Lemma~\ref{gammalem} that, for reasonable values of $\Dt$, $\gamma_n$ is close to unity.
Although the proof of this fact is technical, the result itself
is not surprising, at least for conservative systems.  For such systems, the
value of $\gamma_n$ given by \eqref{gammadefreal} is a solution of
$$
\Delta E(\gamma) := \|u^{n+1}_\gamma\|^2 - \|u^{n+1}\|^2 = 0,
$$
which is a quadratic function of $\gamma$ with one root at zero.
Given that $\Delta E(1) = \bigoh(\Dt^{p+1})$ and that according to \eqref{energy_update_gamma}
$\Delta E \propto \Dt^2$ for conservative problems, it is natural to expect that $\Delta E(\gamma)$ has
a zero within $\bigoh(\Dt^{p-1})$ of unity.
The method described here can be viewed as using a line search to determine
the step size that solves $\Delta E(\gamma)=0$ and thus exactly conserves energy.

For dissipative systems also, we will see that the last two terms
in \eqref{energy_update} are not important (in the sense of accuracy) to the numerical
approximation of the energy evolution; i.e. the term
$2\Dt\sum_j b_j \langle y_j, f_j \rangle$ approximates the energy evolution
over one step to order $p$:
$$
\Delta E(1) = \|u(t_n+\Dt)\|^2 - \|u(t_n)\|^2 = 2\Dt\sum_j b_j \langle y_j, f_j \rangle + \bigoh(\Dt^{p+1}).
$$

\subsection{Accuracy}
At first glance, the RRK method (given by \eqref{stages} with \eqref{ynp1_gamma})
seems to be not even consistent, since $\sum_j \gamma_n b_j = \gamma_n \ne 1$ in
general. For a classical RK method, the condition $\sum_j b_j = 1$, along with higher order conditions, is
necessary for local consistency of a given order.  But for an RRK method, because
the coefficients depend on $\Dt$, we can still obtain high order accuracy if the
order conditions are {\em nearly} satisfied, which is true if $\gamma_n$ is sufficiently close to unity.

\begin{theorem} \label{orderthm1}
Let $(a_{ij}, b_j)$ be the coefficients of a Runge--Kutta method of order $p$.
Consider the RRK method defined by \eqref{stages}, \eqref{ynp1_gamma}
and suppose that
\begin{align} \label{gamma1}
    \gamma_n = 1 + \bigoh(\Dt^{p-1}).
\end{align}
Then:
\begin{enumerate}
    \item (IDT method) If the solution $u^{n+1}_\gamma$ is interpreted as an approximation to $u(t_n + \Dt)$,
        the method has order $p-1$.
    \item (RRK method) If the solution $u^{n+1}_\gamma$ is interpreted as an approximation to $u(t_n + \gamma_n \Dt)$,
        the method has order $p$.
\end{enumerate}
\end{theorem}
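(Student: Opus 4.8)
The plan is to handle the two claims separately, starting from the convenient identity
\[
u^{n+1}_\gamma = u^n + \gamma_n\bigl(u^{n+1}-u^n\bigr),
\]
where $u^{n+1} = u^n + \Dt\sum_j b_j f_j$ is the output of the underlying order-$p$ method. Since that method has order $p$, its local error satisfies $u^{n+1}-u(t_n+\Dt) = \bigoh(\Dt^{p+1})$, and the increment obeys $u^{n+1}-u^n = \Dt\sum_j b_j f_j = \bigoh(\Dt)$. For the first (IDT) claim I would then split the local error as
\[
u^{n+1}_\gamma - u(t_n+\Dt) = \bigl(u^{n+1}-u(t_n+\Dt)\bigr) + (\gamma_n-1)\bigl(u^{n+1}-u^n\bigr).
\]
The first bracket is $\bigoh(\Dt^{p+1})$, and by the hypothesis \eqref{gamma1} the second is $\bigoh(\Dt^{p-1})\cdot\bigoh(\Dt)=\bigoh(\Dt^p)$. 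Hence the local error is $\bigoh(\Dt^p)$, i.e.\ order $p-1$, and this term is what costs the single lost order.

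For the second (RRK) claim, the idea is to compare $u^{n+1}_\gamma$ not with the $\Dt$-step of the base method but with a \emph{genuine} RK step of the relaxed size $\tau := \gamma_n\Dt$. Let $\hat y_i = u^n + \tau\sum_j a_{ij}\hat f_j$ and $\hat u^{n+1} = u^n + \tau\sum_j b_j\hat f_j$, with $\hat f_j = f(t_n+c_j\tau,\hat y_j)$; for $\Dt$ small these stages exist and are smooth. Because the base method has order $p$ and $\tau=\bigoh(\Dt)$, we have $\hat u^{n+1}-u(t_n+\tau) = \bigoh(\tau^{p+1}) = \bigoh(\Dt^{p+1})$, so it suffices to show $u^{n+1}_\gamma-\hat u^{n+1}=\bigoh(\Dt^{p+1})$. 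Since both updates share the weights $b_j$ and the common factor $\tau$, this difference is exactly $\tau\sum_j b_j(f_j-\hat f_j)$, and the whole problem reduces to estimating the stage-derivative discrepancies $f_j-\hat f_j$.

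This last estimate is the main obstacle. The two stage families differ only because $u^{n+1}_\gamma$ evaluates its stages at step $\Dt$ while $\hat u^{n+1}$ uses step $\tau$; in particular the time arguments differ by $c_j(\Dt-\tau) = c_j(1-\gamma_n)\Dt = \bigoh(\Dt^p)$ by \eqref{gamma1}. Lipschitz continuity of $f$ gives $|f_j-\hat f_j| \le C\bigl(|\Dt-\tau| + |y_j-\hat y_j|\bigr)$, while subtracting the two stage equations yields $|y_i-\hat y_i| \le \Dt\sum_j|a_{ij}|\,|f_j-\hat f_j| + \bigoh(\Dt^p)$. Substituting the first bound into the second produces a coupled inequality of the form $\delta \le C''\Dt\,\delta + \bigoh(\Dt^p)$ for $\delta := \max_j|y_j-\hat y_j|$; for $\Dt$ small enough the factor $C''\Dt<1$ is absorbed, closing the estimate to give $\delta=\bigoh(\Dt^p)$ and hence $f_j-\hat f_j=\bigoh(\Dt^p)$. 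Consequently $u^{n+1}_\gamma-\hat u^{n+1} = \tau\,\bigoh(\Dt^p) = \bigoh(\Dt^{p+1})$, which combined with the bound on $\hat u^{n+1}-u(t_n+\tau)$ gives local error $\bigoh(\Dt^{p+1})$, i.e.\ order $p$.

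The delicate point is precisely this stage comparison: for implicit methods the stage equations are coupled, so $y_j-\hat y_j$ cannot be read off directly and one must close the loop with the Gronwall/fixed-point absorption above, using existence and uniqueness of the step-$\tau$ stages for small $\Dt$; for explicit methods the same bound follows by straightforward forward substitution stage by stage. It is worth emphasizing that the hypothesis \eqref{gamma1} enters in exactly two places — controlling the time shift $\Dt-\tau$ and the coefficient perturbation $(\gamma_n-1)b_j$ — and that the contrast between the two claims is traceable entirely to whether the target time is taken to be $t_n+\Dt$ or $t_n+\gamma_n\Dt$.
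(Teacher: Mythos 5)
Your proof is correct, but it takes a genuinely different route from the paper's. The paper argues entirely at the level of order conditions: it regards the relaxed step as the Runge--Kutta method with weights $\gamma_n b$, writes the order-$p$ conditions in Albrecht's form --- quadrature conditions $b^Tc^{k-1}=1/k$ together with homogeneous conditions $b^Tv=0$ --- and observes that relaxation leaves the homogeneous conditions exact while perturbing the $k$th quadrature condition by $\bigoh(\Dt^{p-1})$ in the IDT interpretation, or, after reinterpreting $u^{n+1}_\gamma$ as dense output evaluated at $\gamma_n\Dt$ (so the conditions become $\gamma_n b^Tc^{k-1}=\gamma_n^k/k$), by $\tfrac{\gamma_n}{k}\bigl(1-\gamma_n^{k-1}\bigr)=\bigoh(\Dt^{(k-1)(p-1)})$ for $k\ge 2$; since an order-$k$ residual enters the error expansion multiplied by $\bigoh(\Dt^k)$, this yields local errors $\bigoh(\Dt^{p})$ and $\bigoh(\Dt^{(k-1)p+1})$ with $(k-1)p+1\ge p+1$, respectively. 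You instead give a direct perturbation argument: part~1 via the decomposition $u^{n+1}_\gamma-u(t_n+\Dt)=\bigl(u^{n+1}-u(t_n+\Dt)\bigr)+(\gamma_n-1)\bigl(u^{n+1}-u^n\bigr)$, and part~2 by comparing with a genuine step of size $\tau=\gamma_n\Dt$ of the same method and closing a Lipschitz/contraction bound on the stage differences, $\delta\le C''\Dt\,\delta+\bigoh(\Dt^p)$. Both are sound; note in part~2 that $\gamma_n$ is simply a given number satisfying \eqref{gamma1} (as the theorem hypothesizes), so there is no circularity in forming the $\tau$-step, and the local-error constant of that step is uniform for $\tau$ near $\Dt$. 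What each approach buys: yours is more elementary and self-contained --- no B-series or Albrecht machinery, constants traceable to Lipschitz bounds of $f$, and the implicit case handled explicitly through your fixed-point absorption --- whereas the paper's approach pinpoints exactly \emph{which} conditions fail and at what orders (only the bushy-tree quadrature conditions, with residuals that weaken as $k$ grows), which meshes naturally with the B-series machinery used to prove Lemma~\ref{gammalem} and makes transparent why precisely one order is lost in the IDT interpretation and recovered by reinterpreting the time.
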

\begin{proof}
First we note that by taking appropriate linear combinations of the usual
order conditions, the conditions for a RK method to have order $p$ can be
written as (see \cite{albrecht1996})
\begin{subequations} \label{albrechtOCs}
\begin{align}
    b^T c^{k-1} - 1/k & = 0 & 1 \le k \le p \label{bushytrees} \\
    b^T v & = 0 & \forall v \in V_p,
\end{align}
\end{subequations}
where $V_p$ is a set of vectors depending only on $a_{ij}$ whose specific
elements are not important here.  For the RRK method, we replace $b$ with $\gamma_n b$,
obtaining the conditions
\begin{subequations} \label{albrechtOCsgamma}
\begin{align}
    \gamma_n b^T c^{k-1} - 1/k & = 0 & 1 \le k \le p \label{bushytreesgamma} \\
    \gamma_n b^T v & = 0 & \forall v \in V_p. \label{otherOCsgamma}
\end{align}
\end{subequations}
Given a method that satisfies \eqref{albrechtOCs}, clearly \eqref{otherOCsgamma} is satisfied as well,
while the left hand side of \eqref{bushytreesgamma} is $\bigoh(\Dt^{p-1})$.
In the error expansion, this value gets multiplied by $\bigoh(\Dt^k)$, so the leading error term
is $\bigoh(\Dt^p)$ and the method has order $p-1$.

To prove the second part of Theorem \ref{orderthm1}, we view the solution given by \eqref{ynp1_gamma} as
an interpolant for the RK solution, evaluated at $\gamma_n \Dt$.  The order
conditions for this interpolated solution are
\begin{subequations}
\begin{align}
    \gamma_n b^T c^{k-1} - \gamma_n^k/k & = 0 & 1 \le k \le p \label{bushy_gamma} \\
    \gamma_n b^T v & = 0 & \forall v \in V_p. \label{orthoOCs_gamma}
\end{align}
\end{subequations}
We see that the conditions \eqref{orthoOCs_gamma} and the first bushy-tree
condition (\eqref{bushy_gamma} with $k=1$) are still exactly fulfilled.
The remaining bushy-tree conditions \eqref{bushy_gamma}
are not exactly satisfied; for $k\ge 2$ we have, using \eqref{bushytrees} and \eqref{gamma1},
$$
\gamma_n (b^T c^{k-1} - \gamma_n^{k-1}/k) = \frac{\gamma_n}{k}(1-(1+\Dt^{(k-1)(p-1)})) = \bigoh(\Dt^{(k-1)(p-1)}).
$$
In the error expansion, each of these residuals is multiplied by
$\bigoh(\Dt^k)$, so that the overall error incurred is $\bigoh(\Dt^{(k-1)p+1})$.
The exponent in this expression is at least $p+1$ for $k\ge 2$.
\end{proof}

It turns out that $\gamma_n$ satisfies the condition required in Theorem~\ref{orderthm1}.
\begin{lemma} \label{gammalem}
Let $a_{ij}, b_j$ denote the coefficients of a Runge--Kutta method \eqref{RKM} of
order $p$, let $f$ be a sufficiently smooth function, and let $\gamma_n$ be
defined by \eqref{gammadefreal}.  Then
\begin{align*}
    \gamma_n = 1 + \bigoh(\Dt^{p-1}).
\end{align*}
\end{lemma}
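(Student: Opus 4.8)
The plan is to write $\gamma_n - 1$ as a ratio whose denominator is bounded away from zero and whose numerator is $\bigoh(\Dt^{p-1})$. Throughout I work with the local error, taking $u^n = u(t_n)$, and I abbreviate the numerator and denominator in \eqref{gammadef} by $N = 2\sum_{i,j} b_i a_{ij}\langle f_i, f_j\rangle$ and $D = \sum_{i,j} b_i b_j\langle f_i, f_j\rangle = \|\sum_j b_j f_j\|^2$, so that $\gamma_n = N/D$ and
\begin{align*}
\gamma_n - 1 = \frac{N - D}{D}.
\end{align*}

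First I would control the denominator. Since $y_j = u^n + \bigoh(\Dt)$ and $f$ is smooth, each stage derivative satisfies $f_j = f(t_n, u^n) + \bigoh(\Dt)$, and using $\sum_j b_j = 1$ we get $\sum_j b_j f_j = f(t_n,u^n) + \bigoh(\Dt)$. Hence $D = \|f(t_n,u^n)\|^2 + \bigoh(\Dt)$, which for $\Dt$ small is bounded below by a positive constant whenever $f(t_n,u^n)\neq 0$; the case $f(t_n,u^n)=0$ is precisely the degenerate branch of \eqref{gammadefreal} in which $\gamma_n=1$ by definition. So it remains to show $N - D = \bigoh(\Dt^{p-1})$.

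To obtain this, I would compare two expressions for the one-step energy change. The first is the exact algebraic identity \eqref{energy_update}, which in the present notation reads
\begin{align*}
\|u^{n+1}\|^2 - \|u^n\|^2 = 2\Dt\sum_j b_j \langle y_j, f_j\rangle - \Dt^2 N + \Dt^2 D.
\end{align*}
The second comes from accuracy: since the underlying method has order $p$, $u^{n+1} = u(t_n+\Dt) + \bigoh(\Dt^{p+1})$, and expanding $\|u^{n+1}\|^2 - \|u(t_n+\Dt)\|^2 = \langle u^{n+1}-u(t_n+\Dt),\, u^{n+1}+u(t_n+\Dt)\rangle$ gives $\|u^{n+1}\|^2 - \|u^n\|^2 = \|u(t_n+\Dt)\|^2 - \|u(t_n)\|^2 + \bigoh(\Dt^{p+1})$. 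Subtracting the two expressions pins down the quadratic-in-$\Dt$ terms, provided I can show the quadrature term $2\Dt\sum_j b_j\langle y_j,f_j\rangle$ also reproduces the exact energy change to order $p$, i.e.
\begin{align*}
2\Dt\sum_j b_j \langle y_j, f_j\rangle = \|u(t_n+\Dt)\|^2 - \|u(t_n)\|^2 + \bigoh(\Dt^{p+1}).
\end{align*}
Granting this, one gets $-\Dt^2 N + \Dt^2 D = \bigoh(\Dt^{p+1})$, hence $N - D = \bigoh(\Dt^{p-1})$, and combined with the denominator bound, $\gamma_n - 1 = \bigoh(\Dt^{p-1})/\bigoh(1) = \bigoh(\Dt^{p-1})$.

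The main obstacle is this quadrature estimate, and I expect the clean route is the standard augmented-system device. Introduce a scalar variable $E$ governed by $E' = G(t,u) := 2\langle u, f(t,u)\rangle$, appended to \eqref{IVP} with $E(t_n)=\|u^n\|^2$; since $G$ does not feed back into the $u$-equation, applying the RK method to the augmented system produces exactly the same stages $y_j$, and its $E$-component update is $E^{n+1} = \|u^n\|^2 + \Dt\sum_j b_j G(t_n+c_j\Dt, y_j) = \|u^n\|^2 + 2\Dt\sum_j b_j\langle y_j, f_j\rangle$. Smoothness of $f$ makes $G$ smooth, so the order-$p$ local error estimate applies to the augmented system and yields $E^{n+1} = E(t_n+\Dt) + \bigoh(\Dt^{p+1}) = \|u(t_n+\Dt)\|^2 + \bigoh(\Dt^{p+1})$, which is exactly the required quadrature estimate. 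This completes the chain and establishes $\gamma_n = 1 + \bigoh(\Dt^{p-1})$.
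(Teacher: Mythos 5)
Your proof is correct, but it takes a genuinely different route from the paper's. The paper writes $\gamma_n = 1 - \delta/\eta$ with $\delta = \sum_i b_i \sum_j (b_j - 2a_{ij})\langle f_i, f_j\rangle$ as in \eqref{deltadef}, expands each stage derivative as a B-series following \cite{HW1}, and shows by an explicit tree computation --- using the order conditions together with the identities for $\rho(t')$, $\gamma([t_1,t_2])$ and $\Phi_i(t')$ --- that the coefficient of every pair of trees with $\rho(t_1)+\rho(t_2)\le p-2$ cancels exactly, giving $\delta = \bigoh(\Dt^{p-1})$. You avoid trees entirely: you read $\Dt^2(D-N)$ off as the mismatch between two order-$p$-accurate representations of the same exact energy change, namely the algebraic identity \eqref{energy_update} on one side and the quadrature $2\Dt\sum_j b_j\langle y_j, f_j\rangle$ on the other, and you obtain the $\bigoh(\Dt^{p+1})$ accuracy of that quadrature by the augmented-system device (appending $E' = 2\langle u, f(t,u)\rangle$ as a passive component, so the stages are unchanged and the standard local error bound applies). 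It is worth noting that this quadrature estimate is precisely the claim the paper displays without proof just before its accuracy section; your argument supplies a proof of it and derives the lemma as a corollary, and it is essentially the B-series-free approach that the paper's own remark attributes to \cite{paper2}. What each buys: your route is more elementary and generalizes immediately to non-quadratic functionals, while the paper's B-series proof exhibits the cancellation structure term by term on the method coefficients. One shared imprecision rather than a gap: the conclusion requires the denominator $D$ bounded away from zero; your identification of the degenerate case with $f(t_n,u^n)=0$ is exact for autonomous problems (where $f(u^n)=0$ forces $f_j=0$ for all stages of an explicit method, triggering the first branch of \eqref{gammadefreal}) but not verbatim for nonautonomous ones, and the paper's proof makes the same implicit nondegeneracy assumption when it asserts $\eta = \bigoh(1)$, so you are on equal footing there.
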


We illustrate Lemma \ref{gammalem} in Figure \ref{fig:gamma}, which shows
the convergence of $\gamma_n$ to 1 as the step size is reduced for the problem
\eqref{prob1}.  Due to the symmetry of the problem, even faster convergence
is observed for some methods.

\begin{figure}
    \centering
    \includegraphics[width=4in]{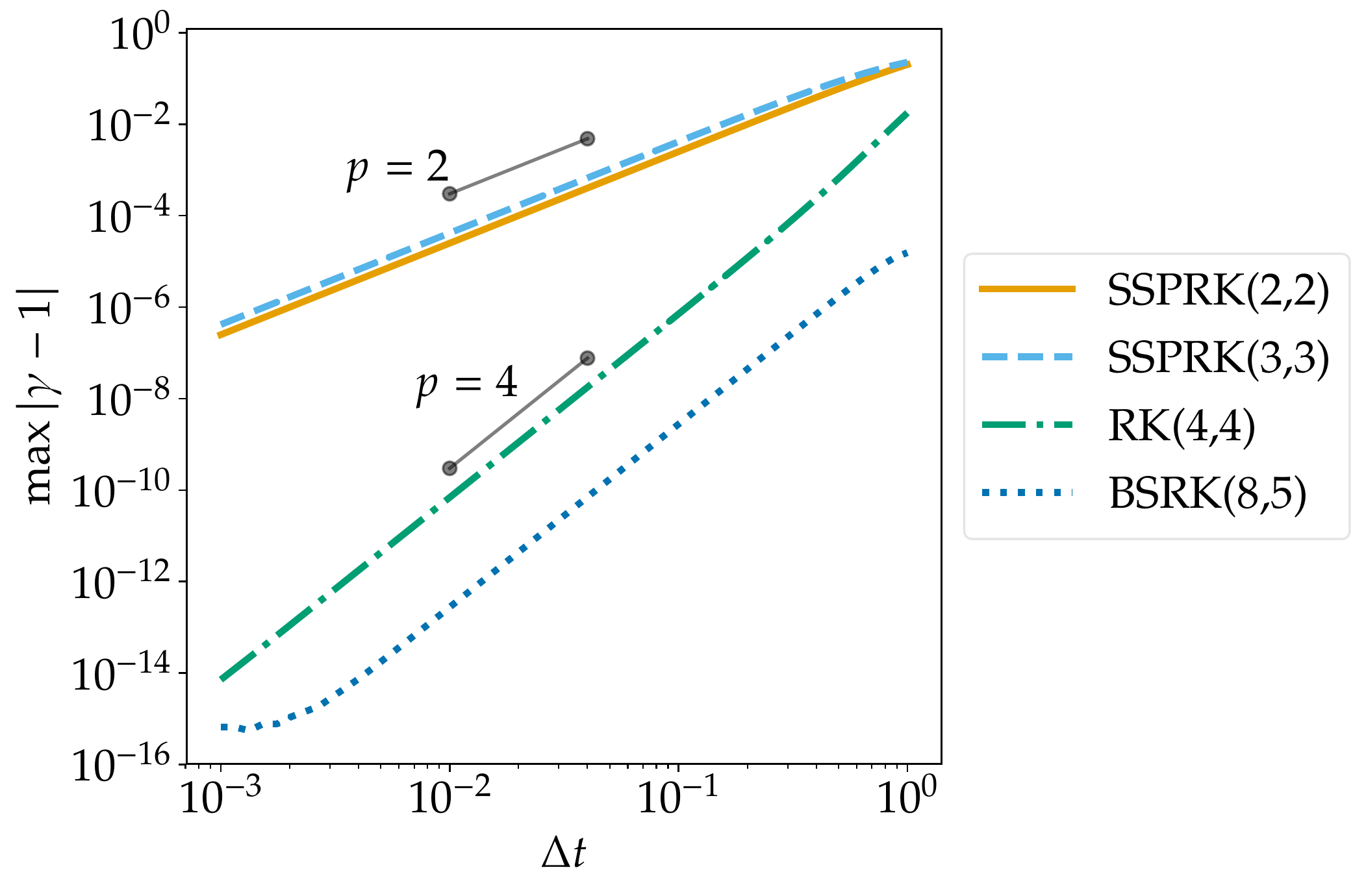}
    \caption{Convergence of $\gamma_n$ to unity for various methods applied to
             problem \eqref{prob1}.  The observed rate is at least the expected
             $\bigoh(\Dt^{p-1})$ for all methods.  The SSPRK(2,2) and RK(4,4)
             methods show convergence at one order higher than expected.
            \label{fig:gamma}
    }
\end{figure}

\begin{remark}
    Lemma \ref{gammalem} was proved in \cite[Proposition 4]{delbuono2002explicit}
    for the special case of a certain family of four-stage, fourth-order methods.
\end{remark}

Before proving Lemma~\ref{gammalem} we state the main consistency result, which
follows immediately from Theorem~\ref{orderthm1} and Lemma~\ref{gammalem}.

\begin{corollary} \label{ordercor}
Let $(A, b)$ be the coefficients of a Runge--Kutta method of order $p$, and
consider the RRK method defined by \eqref{stages} and \eqref{ynp1_gamma}
with $\gamma_n$ defined by \eqref{gammadefreal}.
\begin{itemize}
    \item (IDT method) If the solution $u^{n+1}_\gamma$ is interpreted as an approximation to $u(t_n + \Dt)$,
        the method has order $p-1$.
    \item (RRK method) If the solution $u^{n+1}_\gamma$ is interpreted as an approximation to $u(t_n + \gamma_n \Dt)$,
        the method has order $p$; i.e.\ the solution after one step satisfies
        $$
        \|u^1_\gamma - u(\gamma_1 \Dt)\| = \bigoh(\Dt^{p+1}).
        $$
\end{itemize}
\end{corollary}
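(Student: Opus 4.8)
The plan is to obtain Corollary~\ref{ordercor} as an immediate consequence of the two preceding results. Theorem~\ref{orderthm1} established both order conclusions under the standing hypothesis \eqref{gamma1}, namely $\gamma_n = 1 + \bigoh(\Dt^{p-1})$, while Lemma~\ref{gammalem} asserts that the specific choice of $\gamma_n$ in \eqref{gammadefreal} satisfies exactly this estimate. Thus the whole proof reduces to checking that the hypothesis of the theorem is supplied by the lemma, and then transporting each of the theorem's two conclusions verbatim.

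Concretely, I would first invoke Lemma~\ref{gammalem} to record that, for a base method of order $p$ and sufficiently smooth $f$, the relaxation parameter defined by \eqref{gammadefreal} obeys $\gamma_n = 1 + \bigoh(\Dt^{p-1})$, which is precisely condition \eqref{gamma1}. With \eqref{gamma1} in force, part~1 of Theorem~\ref{orderthm1} gives that interpreting $u^{n+1}_\gamma$ as an approximation to $u(t_n+\Dt)$ yields order $p-1$ (the IDT method), and part~2 gives that interpreting $u^{n+1}_\gamma$ as an approximation to $u(t_n+\gamma_n\Dt)$ yields order $p$ (the RRK method). The explicit one-step bound $\|u^1_\gamma - u(\gamma_1\Dt)\| = \bigoh(\Dt^{p+1})$ is just the local-error restatement of order $p$: a method of order $p$ has local truncation error one power higher, so applying the order-$p$ conclusion on the first step (starting from $u^0=u_0$) gives the stated $\bigoh(\Dt^{p+1})$.

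The only point that deserves care — and the reason the result is phrased as a corollary rather than folded into Theorem~\ref{orderthm1} — is that $\gamma_n$ in \eqref{gammadefreal} is not a free constant but a nonlinear, $\Dt$-dependent functional of the stage derivatives $f_i$. One might worry that this solution-dependence perturbs the order analysis. I would argue that it does not: Theorem~\ref{orderthm1} was deliberately phrased for a $\Dt$-dependent scalar $\gamma_n$ (its proof uses only the multiplicative factor $\gamma_n$ together with the Albrecht order conditions \eqref{albrechtOCs}), so all of the dependence of $\gamma_n$ on the solution is already absorbed into the single asymptotic statement provided by Lemma~\ref{gammalem}. Once that scalar estimate is in hand, the local-truncation-error expansion at a fixed step treats $\gamma_n$ simply as a number lying within $\bigoh(\Dt^{p-1})$ of unity, and no further work is required. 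Hence the substantive obstacle lives entirely in Lemma~\ref{gammalem}, whose proof is taken up separately; the corollary itself is pure composition.
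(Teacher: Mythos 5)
Your proposal is correct and matches the paper exactly: the paper states that Corollary~\ref{ordercor} ``follows immediately from Theorem~\ref{orderthm1} and Lemma~\ref{gammalem},'' which is precisely your composition of the lemma (supplying hypothesis \eqref{gamma1}) with the two parts of the theorem. Your additional remark that the solution-dependence of $\gamma_n$ is already absorbed into the scalar asymptotic estimate is a sound and welcome clarification, but it does not change the route.
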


\begin{remark}
    The first part of Corollary \ref{ordercor} was proved by different means in
    \cite[Thm. 2.1 (ii)]{calvo2006preservation}.
\end{remark}

In order to prove Lemma~\ref{gammalem}
we use the theory of B-series and follow the notation of \cite{HW1}.
In the remainder of this section, $t$ denotes a tree rather than time
and $\gamma(t)$ denotes the density of a tree rather than the relaxation
step length (see \cite[Dfn. 2.10]{HW1}).
Let $\rho(t)$ denote the order of tree $t$, and let $t'$ denote
the tree obtained by attaching a new root node to the root of $t$.
Let $[t_1, t_2]$ denote the tree obtained by attaching a new root node
to the root nodes of $(t_1, t_2)$ as in  \cite[Dfn. 2.12]{HW1}.
Then $$\rho(t')= \rho(t)+1$$ and
$$\gamma([t_1,t_2]) = \gamma(t_1)\gamma(t_2) (\rho(t_1)+\rho(t_2)+1).$$
Finally, let $\Phi_j(t)$ be defined as in \cite[Dfn. 2.9]{HW1}.
We recall \cite[Thm. 2.13]{HW1}:
\begin{theorem}
    A Runge--Kutta method is of order $p$ iff
    \begin{align} \label{OC}
        \sum_{j=1}^s b_j \Phi_j(t) = \frac{1}{\gamma(t)}
    \end{align}
    for all trees of order $\le p$.
\end{theorem}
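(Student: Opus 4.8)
The plan is to compare the Taylor expansions (B-series) of the exact and numerical solutions term by term, indexed by rooted trees, and to read off the order conditions from the requirement that these expansions agree through order $p$. Since the nonautonomous system \eqref{IVP} can be put in autonomous form by appending $t$ as an extra solution component, I assume $f=f(u)$ with no loss of generality. For each rooted tree $t$ let $F(t)$ denote the associated elementary differential, defined recursively by $F(\bullet)=f$ and $F([t_1,\dots,t_m])=f^{(m)}\bigl(F(t_1),\dots,F(t_m)\bigr)$, let $\sigma(t)$ be the usual symmetry factor, and recall the standard identity $\alpha(t):=\rho(t)!/(\sigma(t)\gamma(t))$, which counts the monotonic labelings of $t$.

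First I would derive the expansion of the exact solution. Differentiating $u'=f(u)$ repeatedly and collecting terms by tree structure gives $u^{(q)}(t_n)=\sum_{\rho(t)=q}\alpha(t)\,F(t)$, so Taylor's theorem assembles these into
$$
u(t_n+\Dt)=u^n+\sum_{t}\frac{\Dt^{\rho(t)}}{\sigma(t)\,\gamma(t)}\,F(t).
$$
The combinatorial content is the recursion obeyed by these coefficients under the tree operations $t\mapsto t'$ and $(t_1,t_2)\mapsto[t_1,t_2]$; the factor $\rho(t_1)+\rho(t_2)+1$ appearing in the recorded identity for $\gamma([t_1,t_2])$ is precisely the exponent produced when the corresponding monomial in $\Dt$ is integrated, which is what makes $1/\gamma(t)$ the correct coefficient.

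Next I would derive the numerical expansion by induction on the stage equations \eqref{stages}. Writing each stage value as a B-series in $\Dt$ and substituting $y_j=u^n+\Dt\sum_k a_{jk}f_k$ shows that the elementary weights satisfy $\Phi_j([t_1,\dots,t_m])=\prod_{i}\bigl(\sum_k a_{jk}\Phi_k(t_i)\bigr)$, and inserting these into the update \eqref{ynp1} gives
$$
u^{n+1}=u^n+\sum_{t}\frac{\Dt^{\rho(t)}}{\sigma(t)}\Bigl(\sum_{j=1}^s b_j\,\Phi_j(t)\Bigr)F(t).
$$
Checking the base cases (the one-node tree yields the coefficient $\sum_j b_j$ and the two-node tree yields $b^Tc$) and propagating the $\Phi_j$ recursion confirms that the coefficient of $\Dt^{\rho(t)}F(t)/\sigma(t)$ in the numerical series is exactly $\sum_j b_j\Phi_j(t)$.

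Finally I would subtract the two series, so that the local error is
$$
u(t_n+\Dt)-u^{n+1}=\sum_{t}\frac{\Dt^{\rho(t)}}{\sigma(t)}\Bigl(\tfrac{1}{\gamma(t)}-\sum_{j=1}^s b_j\Phi_j(t)\Bigr)F(t).
$$
Because the elementary differentials $F(t)$ are linearly independent as $f$ ranges over sufficiently smooth functions, the $\Dt^{q}$ contributions vanish for every right-hand side iff $\sum_j b_j\Phi_j(t)=1/\gamma(t)$ for all trees with $\rho(t)=q$. Requiring this for every $\rho(t)\le p$ is exactly equivalent to the local error being $\bigoh(\Dt^{p+1})$, i.e.\ to the method having order $p$. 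I expect the main obstacle to be the two combinatorial inductions---matching the density recursion for the exact series and the $\Phi_j$ recursion for the numerical one against the operations $t'$ and $[t_1,t_2]$---together with a careful justification of the linear independence of the $F(t)$, which is precisely what licenses equating coefficients tree by tree.
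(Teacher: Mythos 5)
Your proposal is correct and coincides with the canonical argument: the paper does not prove this theorem at all but simply recalls it from \cite[Thm.~2.13]{HW1}, and the proof given in that reference proceeds exactly as you outline---the B-series of the exact solution with coefficients $1/(\sigma(t)\gamma(t))$, the B-series of the numerical solution with elementary weights $\sum_j b_j\Phi_j(t)$ obtained by induction through the stage equations, and comparison of the two series using the linear independence of the elementary differentials. The only work remaining is to carry out in full the two combinatorial inductions and the independence lemma you already flag, all of which are standard.
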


\begin{proof}[Proof of Lemma~\ref{gammalem}]
We can write $\gamma = 1-\delta/\eta$ where $\eta=\bigoh(1)$ and
\begin{align} \label{deltadef}
\delta = \sum_i b_i \sum_j (b_j - 2 a_{ij}) \langle f_i, f_j \rangle.
\end{align}
Thus it suffices to show that
$$
\sum_i b_i \sum_j (b_j - 2 a_{ij}) \langle f_i, f_j \rangle = \bigoh(\Dt^{p-1}).
$$
From \cite[Thm. II.2.11]{HW1} we have the Taylor series for the $i$th stage
derivative:
$$
f_i = \sum_{q=1}^\infty \frac{\Dt^q}{q!} \sum_{t\in LT_q} \gamma(t) \sum_k a_{ik} \Phi_k(t) F(t)(u_0),
$$
where $F(t)$ is the elementary differential corresponding to $t$.
Thus $\langle f_i, f_j \rangle$ can be expressed as a linear combination of
inner products of elementary differentials:
$$
\langle f_i, f_j \rangle = \sum_{t_1} \sum_{t_2} \frac{\Dt^{\rho(t_1)+\rho(t_2)}}{\rho(t_1)!\rho(t_2)!}\gamma(t_1) \gamma(t_2) \beta_{ij}(t_1,t_2) \langle F(t_1)(u_0), F(t_2)(u_0) \rangle,
$$
where $\rho(t)$ is the order (number of nodes) of tree $t$, and $t_1, t_2$ range over the
set of all labelled rooted trees.  Here
\begin{align*}
    \beta_{ij}(t_1, t_2) & = \sum_k a_{ik} \Phi_k(t_1) \sum_l a_{jl} \Phi_j(t_2) \\
    & = \Phi_i(t_1')  \Phi_j(t_2').
\end{align*}

Because of the symmetry of the inner product, it is sufficient to show that
$$
 \sum_i b_i \sum_j (b_j - 2 a_{ij})   (\beta_{ij}(t_1,t_2) + \beta_{ij}(t_2, t_1)) = 0
$$
for all pairs of trees $(t_1,t_2)$ satisfying
\begin{align} \label{treeorder}
    \rho(t_1)+\rho(t_2)\le p-2.
\end{align}


We have
\begin{align*}
\sum_i b_i \sum_j (b_j -2 a_{ij}) & (\beta_{ij}(t_1,t_2) + \beta_{ij}(t_2,t_1)) \\ & = \sum_i b_i \sum_j (b_j-2a_{ij})\left(\Phi_i(t_1') \Phi_j(t_2') + \Phi_i(t_2')\Phi_j(t_1')\right) \\
    & = \frac{2}{\gamma(t_1')\gamma(t_2')} - 2 \sum_i b_i \left(\Phi_i(t_1')\Phi_i(t_2'') + \Phi_i(t_2')\Phi_i(t_1'')\right) \\
    & = \frac{2}{\gamma(t_1')\gamma(t_2')} - 2 \sum_i b_i \left(\Phi_i([t_1,t_2']) + \Phi_i([t_2,t_1'])\right) \\
    & = \frac{2}{\gamma(t_1')\gamma(t_2')} - \frac{2}{\rho(t_1)+\rho(t_2)+2}\left(\frac{1}{\gamma(t_1)\gamma(t_2')} \frac{1}{\gamma(t_2)\gamma(t_1')}\right) \\
    & = \frac{2}{\gamma(t_1')\gamma(t_2')}\left( 1 - \frac{1}{\rho(t_1)+\rho(t_2)+2} \left( \frac{\gamma(t_1')}{\gamma(t_1)} + \frac{\gamma(t_2')}{\gamma(t_2)}\right)\right) = 0.
\end{align*}
Here we have applied \eqref{OC} in various places, using the fact that
our method is of order $p$ and that (due to \eqref{treeorder})
the trees in question all have order less than or equal to $p$.
\end{proof}

\begin{remark}
    It is possible to prove a generalization of Lemma \ref{gammalem}
    without the use of B-series; see \cite{paper2}.  The proof above
    is included here to show a direct approach that is very different from that
    used in \cite{paper2}.
\end{remark}

\subsection{Comparison with projection methods}
A projection Runge--Kutta method \cite[Section IV.4]{hairer2006geometric}
consists of the traditional Runge--Kutta
formula \eqref{RKM} followed by a projection step:
$$
  u_{\lambda}^{n+1} = u^{n+1} - \lambda \Phi,
$$
where $\Phi$ is the projection direction and $\lambda$ is chosen so
that $u_\lambda^{n+1}$ lies on a desired manifold.
A relaxation Runge--Kutta method can be viewed as a projection method
along the direction $\Phi=u^{n+1}-u^n$ with step length
$\lambda = 1-\gamma_n$.  However, the existing formalism for such methods
does not include the possibility of interpreting the new solution as
an approximation at a time different from $t_n + \Dt$.
In \cite{calvo2006preservation}, this projection perspective was applied
and it was shown that the resulting method is indeed of order $p-1$
when the original RK method is order $p$.  We follow the terminology
of \cite{calvo2006preservation} and refer to this interpretation as
the {\em incremental direction technique}, or IDT.

To properly write a relaxation method as a projection method we can write
\eqref{IVP} as the equivalent autonomous system of $m+1$ ODEs $v' = g(v)$
in the standard way with
\begin{align*}
v & = \begin{bmatrix} u \\ t \end{bmatrix}, & g(v) = \begin{bmatrix} f(v_{m+1},v_{1:m}) \\ 1 \end{bmatrix}.
\end{align*}
where $g(v) = f(v_{m+1},v_{1:m})$.  The projection of the solution of
this problem in the direction $\Phi=v^{n+1}-v^n$  requires
also projecting the updated value of $t$ to obtain the value $t_n+\gamma_n \Dt$.

\section{Stability properties of explicit relaxation RK methods}
All of the results in the previous section apply to general Runge--Kutta
methods.  In the rest of this work, we focus on explicit methods.

An important question not explicitly answered in the foregoing analysis is how large
the step size $\Dt$ can be taken in practice.
Theorem \ref{stabilitythm} guarantees unconditional stability for RRK methods
applied to any conservative problem, and guarantees stability for dissipative
problems as long as the step size is small enough.  Due to the overall explicit
nature of the methods, we should not expect in either case to obtain accurate
results using step sizes much larger than what the original explicit RK method
allows; in this respect RRK methods behave similarly to rational Runge--Kutta
methods, to which their form is also very similar
\cite{wambecq1978rational,hairer1980unconditionally}.  In this section we
investigate linear stability of RRK
methods and also study how the strong stability preserving (SSP) property is
affected by the use of RRK methods.

The behavior of an RRK method may be quite challenging to analyze, since $\gamma$
depends in a nonlinear way on the method coefficients and the numerical solution
itself.  On the other hand, over a single step we can think of $\gamma$ as a fixed
value close to unity, and study the properties of the Runge--Kutta method
\begin{align} \label{rkm_gamma}
\begin{array}{r|c}
c & A \\ \midrule & \gamma b^T
\end{array}.
\end{align}

\subsection{The stability function}
The stability function of a Runge--Kutta method is
$$
R(z) = 1 + zb^T(I-zA)^{-1}e,
$$
where $e$ is  vector with all entries equal to unity.
Thus the stability function corresponding to one step of the relaxation method \eqref{rkm_gamma} is
\begin{align}
R_\gamma(z) = 1 + z\gamma b^T(I-zA)^{-1}e.
\end{align}
Letting $\alpha_k$ denote the coefficients of $R(z)$ for an explicit method:
$$
    R(z) = 1 + \sum_{k=1}^s \alpha_k z^k,
$$
we have
$$
    R_\gamma(z) = 1 + \gamma \sum_{k=1}^s \alpha_k z^k.
$$
Let $S(A,b)\subset \Complex$ denote the region of absolute stability of
RK method $(A,b)$.  It turns out that $S(A,\gamma b)$ grows as $\gamma$ decreases.
\begin{theorem} \label{stabregthm}
	Let $\gamma_1, \gamma_2$ be given such that $0 \le \gamma_1 \le \gamma_2$.
        Then $S(A,\gamma_2 b)\subseteq S(A, \gamma_1 b)$.
\end{theorem}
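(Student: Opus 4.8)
The plan is to reduce the set inclusion to a one-parameter statement about a single complex number. First I would isolate the $\gamma$-dependence by writing the stability function in the form $R_\gamma(z) = 1 + \gamma P(z)$, where $P(z) = \sum_{k=1}^s \alpha_k z^k = R(z) - 1$ does not depend on $\gamma$. With this notation the stability region is $S(A, \gamma b) = \{z \in \Complex : |1 + \gamma P(z)| \le 1\}$. Fixing an arbitrary $z$ and setting $w = P(z)$, the entire claim reduces to the scalar assertion: if $|1 + \gamma_2 w| \le 1$ and $0 \le \gamma_1 \le \gamma_2$, then $|1 + \gamma_1 w| \le 1$.

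The cleanest route from here is geometric. The map $\gamma \mapsto 1 + \gamma w$ is an affine function of the real parameter $\gamma$, tracing a line through the point $1$, which is attained at $\gamma = 0$. The closed unit disk $\overline{D} = \{\zeta \in \Complex : |\zeta| \le 1\}$ is convex, so its preimage $I = \{\gamma \in \R : 1 + \gamma w \in \overline{D}\}$ is an interval. Since $\gamma = 0$ yields the point $1 \in \partial \overline{D} \subseteq \overline{D}$, we have $0 \in I$. Hence, if the hypothesis places $\gamma_2 \in I$ and $0 \le \gamma_1 \le \gamma_2$, then $\gamma_1$ is a convex combination of the two elements $0, \gamma_2 \in I$ and so $\gamma_1 \in I$. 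This gives $|1 + \gamma_1 w| \le 1$, and since $z$ was arbitrary, $S(A, \gamma_2 b) \subseteq S(A, \gamma_1 b)$.

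If one prefers an explicit version in place of the convexity appeal, I would expand $|1 + \gamma w|^2 - 1 = 2\gamma \Re(w) + \gamma^2 |w|^2 = \gamma\bigl(2\Re(w) + \gamma |w|^2\bigr)$, a convex quadratic in $\gamma$ that vanishes at $\gamma = 0$. For $\gamma \ge 0$ this quantity is nonpositive precisely on an interval $[0, \gamma^\ast]$, with $\gamma^\ast = -2\Re(w)/|w|^2$ when $\Re(w) < 0$ (and $\gamma^\ast = 0$ otherwise; the degenerate case $w = 0$ gives $|1 + \gamma w| = 1$ identically). The assumption $|1 + \gamma_2 w| \le 1$ puts $\gamma_2$ in $[0, \gamma^\ast]$, so any $\gamma_1 \in [0, \gamma_2]$ lies there as well, yielding the same conclusion.

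I do not anticipate a genuine obstacle: once the $\gamma$-dependence is exposed as the scalar multiplier of $P(z)$, the result is elementary. The only points requiring care are the degenerate case $w = P(z) = 0$, where the claim holds trivially, and the boundary membership $1 \in \overline{D}$, which is exactly what forces the admissible interval of $\gamma$ to begin at $0$ and thereby makes shrinking $\gamma$ toward zero keep $z$ inside the stability region.
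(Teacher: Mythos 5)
Your proposal is correct and follows essentially the same route as the paper: both fix an arbitrary $z$, set $w = \sum_k \alpha_k z^k = R(z)-1$ to isolate the $\gamma$-dependence, and reduce the inclusion to the scalar claim that $|1+\gamma_2 w|\le 1$ with $0\le\gamma_1\le\gamma_2$ implies $|1+\gamma_1 w|\le 1$. The only difference is that the paper dismisses this scalar step as ``clearly'' true, whereas you justify it properly (via convexity of the unit disk, or the explicit quadratic in $\gamma$), which is a welcome filling-in of detail rather than a different argument.
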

\begin{proof}
    It is sufficient to show that if $|R_{\gamma_2}(z_*)|\le 1$ for some $z_*\in\Complex$
then $|R_{\gamma_1}(z_*)|\le 1$.  Let $w = \sum_{k=1}^s \alpha_k z_*^k$; then clearly
$|R_{\gamma_2}(z_*)| = |1+\gamma_2 w| \le 1$ implies $|R_{\gamma_1}(z_*)| = |1+\gamma_1 w| \le 1$.
\end{proof}

Theorem \ref{stabregthm} is illustrated in Figure \ref{fig:stabregs}, which shows the
stability region for relaxation RK methods with 2--4 stages and order equal to the stage number.
Stability region boundaries are shown for different values of $\gamma$ in the
range $[0.7, 1.3]$.  This is an exaggerated range compared to values of $\gamma$ that
are used in practice; for practical values of $\gamma$, the change in the stability region
is visually too small to notice.
It is particularly interesting to note that small reductions in $\gamma$ lead to significantly
enhanced stability along the imaginary axis.  Note also that as $\gamma\to 0$, the
RRK method tends to the identity map and $S(A,\gamma b) \to \Complex$.

\begin{figure}
    \centering
    \begin{subfigure}[t]{1.6in}
        \includegraphics[width=1.6in]{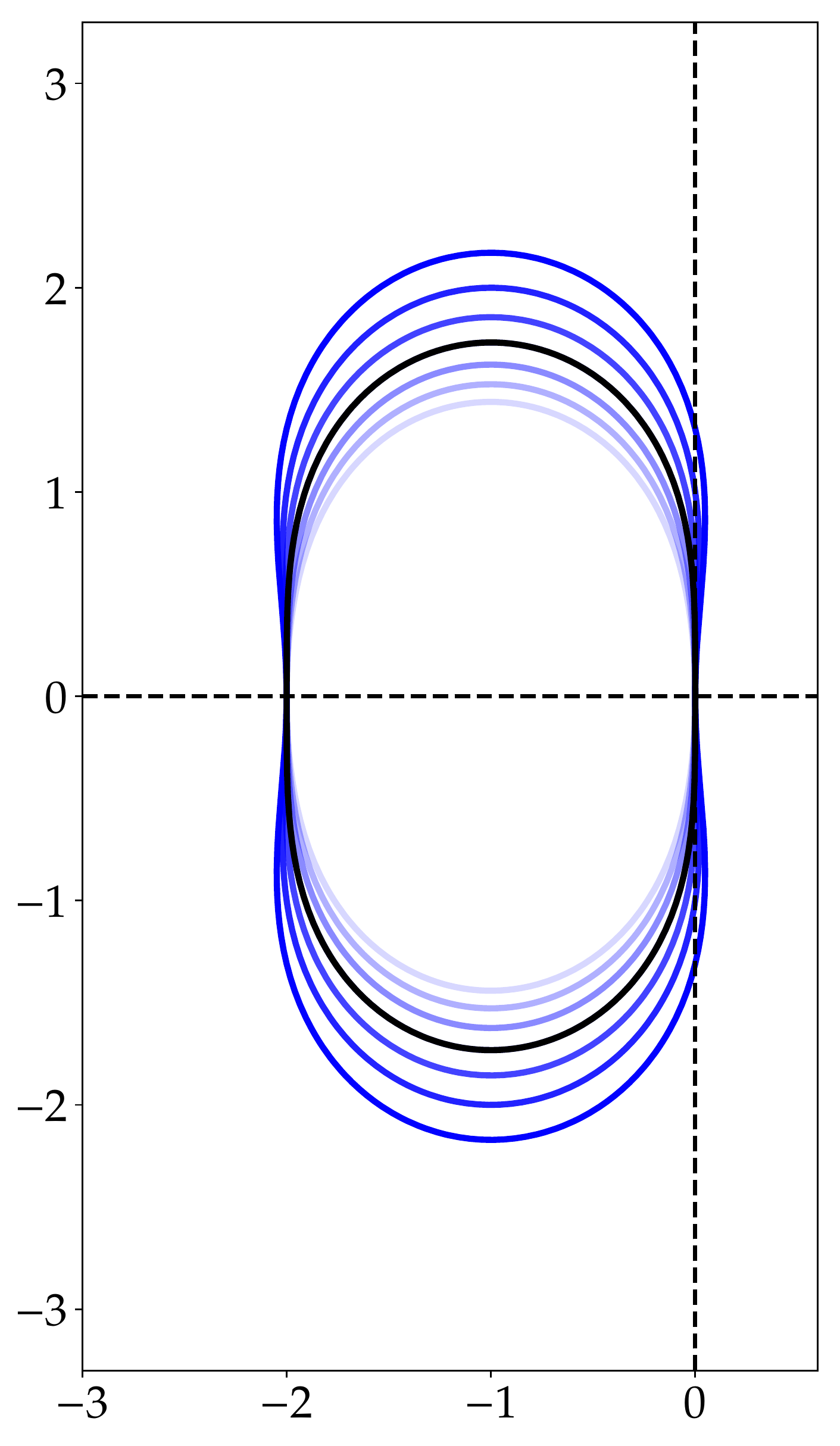}
        \caption{2-stage, 2nd-order RRK methods.
        \label{fig:rk2stabreg}}
    \end{subfigure}~
    \begin{subfigure}[t]{1.6in}
        \includegraphics[width=1.6in]{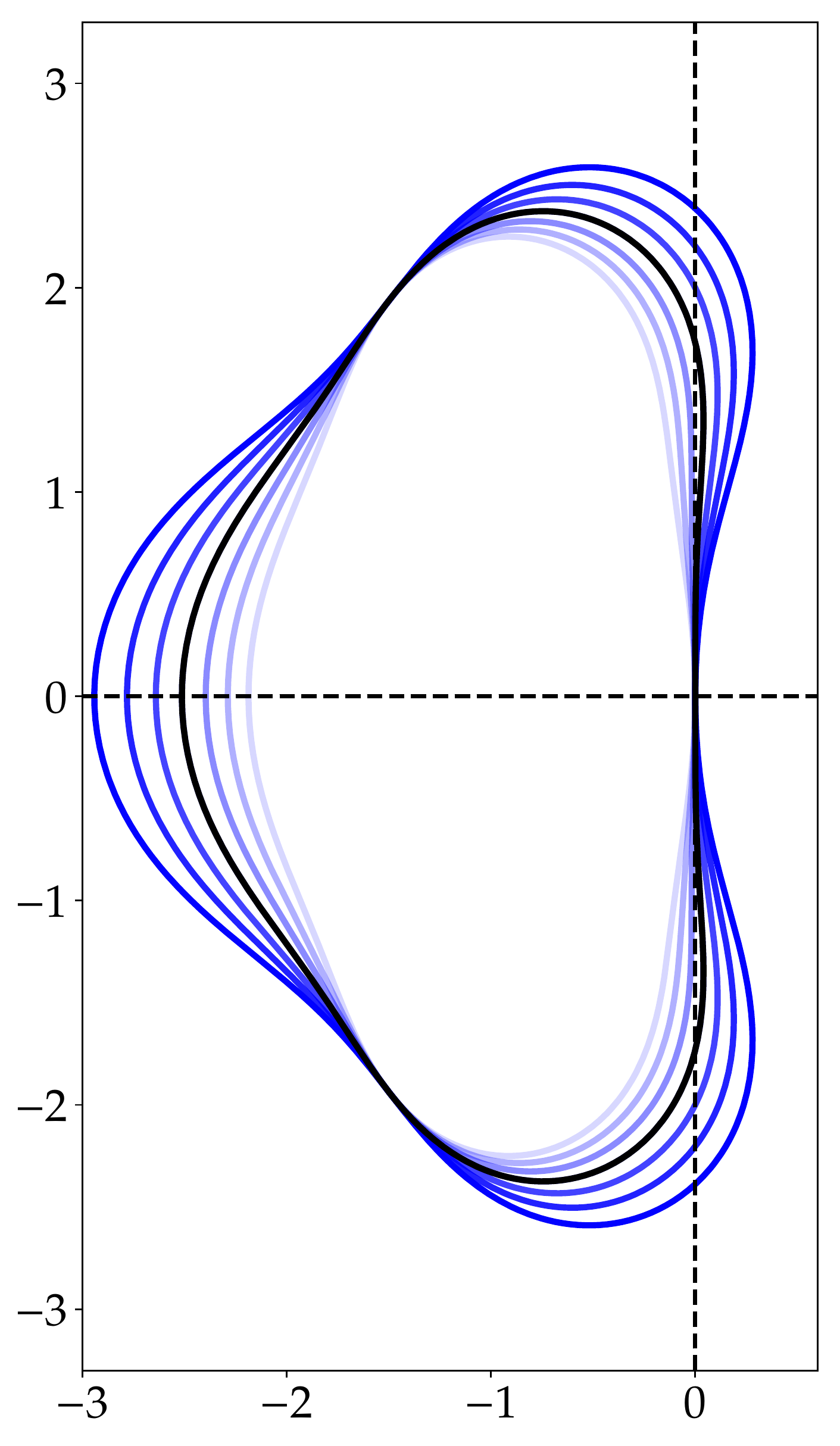}
        \caption{3-stage, 3rd-order RRK methods.
        \label{fig:rk3stabreg}}
    \end{subfigure}~
    \begin{subfigure}[t]{1.6in}
        \includegraphics[width=1.6in]{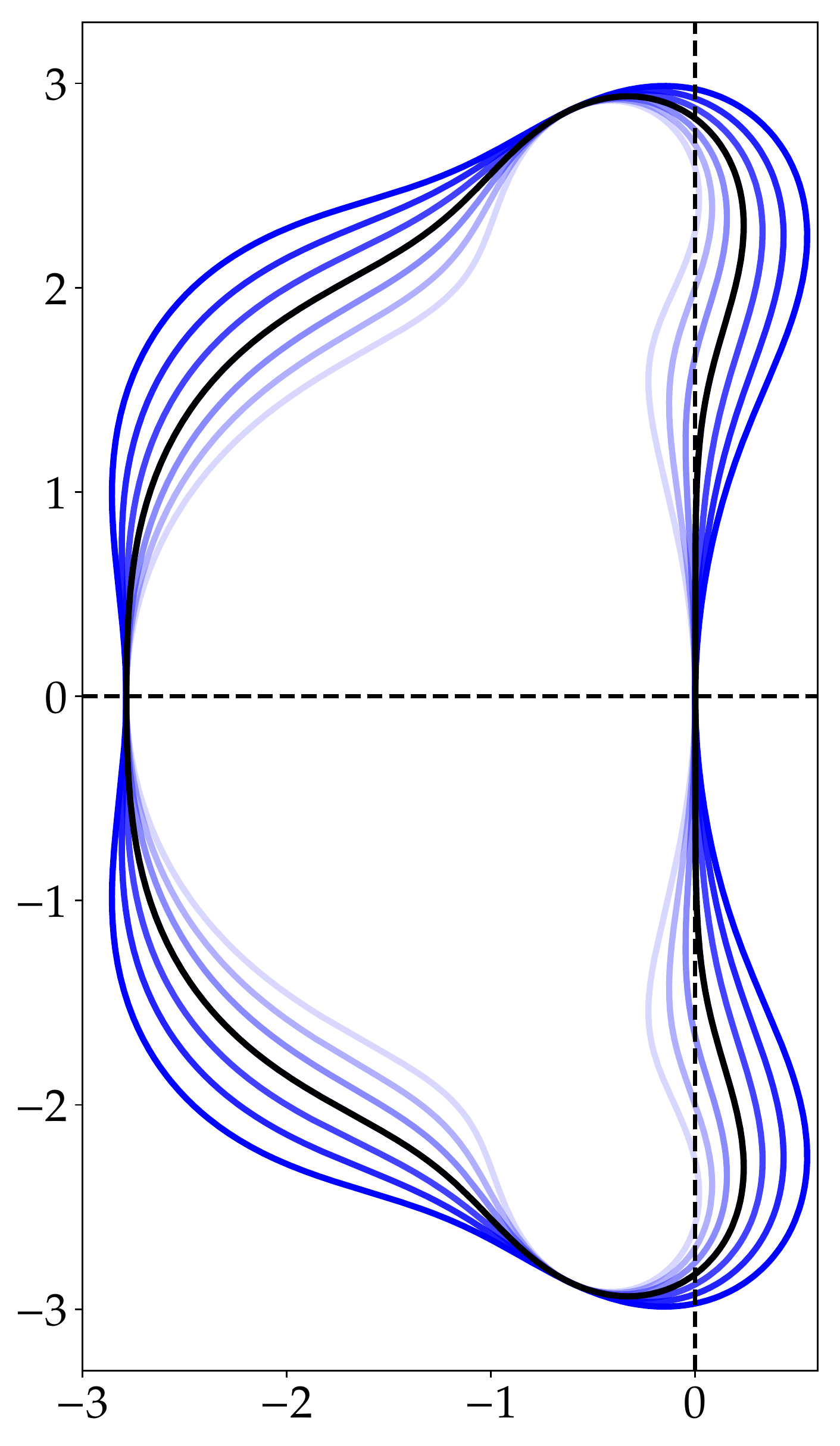}
        \caption{4-stage, 4th-order RRK methods.
        \label{fig:rk4stabreg}}
    \end{subfigure}
    \caption{Stability regions for RRK methods.  Regions are shown for $\gamma$ ranging
            from $0.7$ to $1.3$.  Larger regions correspond to smaller values of $\gamma$.
            The stability region of the original RK method (corresponding to $\gamma=1$)
            is outlined in black.
            \label{fig:stabregs}}
\end{figure}

Consideration of absolute stability along the imaginary axis leads to the
so-called E-polynomial, which for explicit RK methods is
$$
E(y) = 1 - |R(iy)|^2 = 1 - R(iy)R(-iy).
$$
Clearly the method is stable for $z=iy$ such that $E(y) \ge 0$.  Direct calculation shows that
for any RRK method of order at least $p$ (where $p\ge 2$), we have
$$
E_\gamma(y) = \sum_{j=1}^{p/2} \frac{2}{(2j)!} \gamma(1-\gamma)y^{2j} + \bigoh(y^{p+2}).
$$
The leading terms up to $y^p$ (which vanish for a standard RK method) are
positive for $0\le \gamma < 1$, so we see that for
every method of order two or higher $S(A,\gamma b)$ includes a segment of the imaginary
axis containing $z=0$ when $\gamma\in[0,1)$.  This can be observed for instance in Figure \ref{fig:rk2stabreg},
where the RK method is unstable over the whole imaginary axis, but for $\gamma <1$ the
stability regions include part of the imaginary axis.

\subsection{Strong stability preservation}
Because the RRK method is only a small perturbation of the original RK method,
desirable properties of the original method may remain in effect when using the
RRK version.  We illustrate this idea by studying strong stability preserving
RRK methods.

In the following, $\C(A,b)$ denotes the SSP coefficient (or radius of absolute
monotonicity) of the Runge--Kutta method with coefficients $(A,b)$.
Recall that $\C(A,b)$ is equal to the largest value $r\ge0$ such that the method
$(A,b)$ is absolutely monotonic at $-r$ \cite{ferracina2005}.

For $0\le\gamma\le 1$, $u_\gamma^{n+1}$ is a convex combination of $u^n$ and
$u^{n+1}$, which implies that $\C(A,\gamma b)\ge \C(A,b)$ for $0\le \gamma \le 1$.
For many methods, $\C(A,\gamma b)$ also does not decrease when $\gamma$
is taken a little larger than 1.

\begin{lemma} \label{SSPlemma}
Let the RK method with coefficients $(A,b)$ be absolutely monotonic
at $z=-r$.  Then the method $(A,\gamma b)$ with $\gamma\ge 0$ is also
absolutely monotonic at $z=-r$ iff $R_\gamma(-r)\ge 0$.
\end{lemma}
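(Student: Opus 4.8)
The plan is to reduce the claim to the standard algebraic characterization of absolute monotonicity and to exploit the fact that passing from $(A,b)$ to $(A,\gamma b)$ changes only the weight vector, not the coefficient matrix $A$. That characterization (see \cite{ferracina2005}) splits into two groups of conditions: those governing the internal stages, which depend only on $A$, and those governing the output, which involve $b$. Since $A$ is untouched, the first group is inherited for free from the hypothesis that $(A,b)$ is absolutely monotonic at $-r$, and only the output conditions require examination.

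Concretely, I would first record the characterization in the following form. For $r\ge 0$ with $(I+rA)^{-1}$ existing (automatic for explicit $A$), the method $(A,b)$ is absolutely monotonic at $-r$ iff
\begin{align*}
(I+rA)^{-1}e \ge 0, \qquad r(I+rA)^{-1}A \ge 0, \qquad rb^T(I+rA)^{-1}\ge 0, \qquad R(-r)\ge 0,
\end{align*}
with all inequalities meant componentwise. I would justify this the usual way: applying the method to $u'=f(u)$ and expressing each $\Dt\, f(y_j)$ through a forward--Euler increment, one writes every stage, and the update $u^{n+1}$, as an affine combination of $u^n$ and of the increments, with coefficients summing to one. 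The coefficient of $u^n$ in the output is exactly $1-rb^T(I+rA)^{-1}e = R(-r)$, and the remaining output coefficients are the entries of $rb^T(I+rA)^{-1}$; nonnegativity of all of these (together with the analogous stage statement, which depends only on $A$) is precisely what yields $\|u^{n+1}\|\le\|u^n\|$ as a convex combination.

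Next I would apply this to $(A,\gamma b)$. The first two conditions involve only $A$ and hence hold by hypothesis. The third becomes $r(\gamma b)^T(I+rA)^{-1}=\gamma\, rb^T(I+rA)^{-1}$, which is nonnegative because $\gamma\ge 0$ and $rb^T(I+rA)^{-1}\ge 0$ already holds for $(A,b)$. The coefficient of $u^n$ in the output of $(A,\gamma b)$ is $1-r(\gamma b)^T(I+rA)^{-1}e = R_\gamma(-r)$. Thus every condition except $R_\gamma(-r)\ge 0$ is automatic, so absolute monotonicity of $(A,\gamma b)$ at $-r$ is equivalent to $R_\gamma(-r)\ge 0$. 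The forward direction is immediate ($R_\gamma(-r)\ge 0$ is one of the necessary conditions), and the reverse direction follows by collecting the conditions just verified.

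The only genuinely delicate point is stating the characterization in exactly this form and confirming the clean split between the $A$-dependent and $b$-dependent conditions; once that is in place the argument is pure bookkeeping. The two crucial observations are that scaling $b$ by the nonnegative factor $\gamma$ preserves the sign of $rb^T(I+rA)^{-1}$, and that the self-coefficient of the output is literally $R_\gamma(-r)$.
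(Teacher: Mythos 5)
Your proposal is correct and follows essentially the same route as the paper: both invoke the Kraaijevanger/Ferracina--Spijker characterization of absolute monotonicity, observe that the stage conditions depend only on $A$ and that $b^T(I-zA)^{-1}\ge 0$ is preserved under scaling $b$ by $\gamma\ge 0$, leaving $R_\gamma(-r)\ge 0$ as the only condition in play. Your extra factors of $r$ and the ordering $(I+rA)^{-1}A$ versus $A(I+rA)^{-1}$ are immaterial since $A$ commutes with $(I+rA)^{-1}$, and your sketch of the convex-combination justification merely spells out what the paper cites.
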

\begin{proof}
The conditions for absolute monotonicity of method $(A,b)$
at $z$ can be written (see, e.g. \cite[p. 211]{ferracina2005}
\begin{subequations} \label{absmoncond}
\begin{align}
A(I-zA)^{-1} & \ge 0 & (I-zA)^{-1}e \ge 0 \label{Aconds} \\
b^T(I-zA)^{-1} & \ge 0 & R(z) \ge 0. \label{bconds}
\end{align}
\end{subequations}
Conditions \eqref{Aconds} do not depend on
$b$, while the first condition of \eqref{bconds} will hold for any positive multiple of
$b$ if it holds for $b$.
\end{proof}

\begin{theorem} \label{SSPthm}
Given a RK method with coefficients $(A,b)$, we have $\C(A,\gamma b) \ge \C(A,b)$
for $0 \le \gamma \le \gamma_*$, where
$$
    \gamma_* = -\frac{1}{\sum_k \alpha_k(-\C(A,b))^k} = \frac{-1}{R(-\C)-1} \ge 1.
$$
\end{theorem}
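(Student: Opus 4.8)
The plan is to reduce the claim, via Lemma~\ref{SSPlemma}, to a single scalar inequality of the form $R_\gamma(-r)\ge 0$, and then to identify the most binding value of $r$ in $[0,\C]$. Writing $\C=\C(A,b)$, the method $(A,b)$ is absolutely monotonic at $-r$ for every $r\in[0,\C]$; so by Lemma~\ref{SSPlemma} the method $(A,\gamma b)$ is absolutely monotonic at $-r$ if and only if $R_\gamma(-r)\ge 0$. It therefore suffices to show that for $0\le\gamma\le\gamma_*$ we have $R_\gamma(-r)\ge 0$ for every $r\in[0,\C]$, since this gives $\C(A,\gamma b)\ge\C$.

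I would then write the relaxed stability function as
$$
R_\gamma(-r) = 1 + \gamma\bigl(R(-r)-1\bigr),
$$
using $R(z)=1+\sum_k\alpha_k z^k$ and $R_\gamma(z)=1+\gamma\sum_k\alpha_k z^k$. Since $\gamma\ge 0$, the right-hand side is smallest where $R(-r)-1$ is most negative, i.e.\ where $R(-r)$ is smallest on $[0,\C]$. The key step is thus to prove that $r\mapsto R(-r)$ is non-increasing on $[0,\C]$, so that its minimum is attained at the endpoint $r=\C$.

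To establish this I would differentiate the stability function directly. Using $\frac{d}{dz}(I-zA)^{-1}=(I-zA)^{-1}A(I-zA)^{-1}$ together with the identity $I+zA(I-zA)^{-1}=(I-zA)^{-1}$, one obtains
$$
R'(z) = b^T(I-zA)^{-2}e = \bigl(b^T(I-zA)^{-1}\bigr)\bigl((I-zA)^{-1}e\bigr).
$$
At $z=-r$ the two factors are precisely the absolute monotonicity quantities $b^T(I+rA)^{-1}$ and $(I+rA)^{-1}e$, both entrywise nonnegative for $r\in[0,\C]$ by \eqref{Aconds}--\eqref{bconds}. Hence $R'(-r)\ge 0$, so $\frac{d}{dr}R(-r)=-R'(-r)\le 0$ and $R(-r)$ is non-increasing; in particular $0\le R(-\C)\le R(0)=1$. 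I expect this factorization to be the crux of the argument, since it simultaneously makes $r=\C$ the binding constraint and forces $R(-\C)\le 1$, which is what yields $\gamma_*\ge 1$.

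Finally I would assemble the estimate. For $r\in[0,\C]$ and $0\le\gamma\le\gamma_*$, monotonicity gives $R(-r)-1\ge R(-\C)-1$, and because $R(-\C)-1\le 0$ the bound $\gamma\le\gamma_*$ gives
$$
R_\gamma(-r) = 1 + \gamma\bigl(R(-r)-1\bigr) \ge 1 + \gamma_*\bigl(R(-\C)-1\bigr) = 1 + \frac{R(-\C)-1}{1-R(-\C)} = 0.
$$
By Lemma~\ref{SSPlemma} this makes $(A,\gamma b)$ absolutely monotonic at every $-r$, $r\in[0,\C]$, so $\C(A,\gamma b)\ge\C$. The stated formula then follows from $\sum_k\alpha_k(-\C)^k=R(-\C)-1$, giving $\gamma_*=-1/(R(-\C)-1)=1/(1-R(-\C))$, and $\gamma_*\ge 1$ is immediate from $0\le R(-\C)\le 1$ (with $\gamma_*=\infty$ in the degenerate case $R(-\C)=1$).
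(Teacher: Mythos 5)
Your proof is correct, and it differs from the paper's in one substantive respect. Both arguments share the same skeleton: reduce via Lemma~\ref{SSPlemma} to the scalar condition $R_\gamma(-r)\ge 0$ and exploit the identity $R_\gamma(z)-1=\gamma\left(R(z)-1\right)$ to locate the threshold $\gamma_*=-1/(R(-\C)-1)$. The paper, however, disposes of the whole interval $[-\C,0]$ by citing a standard result of Kraaijevanger (absolute monotonicity at $z=-r$ implies absolute monotonicity on $[-r,0]$) applied to the relaxed method, so that only the single endpoint condition $R_\gamma(-\C)\ge 0$ ever needs checking. You instead verify $R_\gamma(-r)\ge 0$ pointwise for all $r\in[0,\C]$, which forces you to prove that $r\mapsto R(-r)$ is non-increasing; your factorization
$$
R'(-r)=b^T(I+rA)^{-2}e=\bigl(b^T(I+rA)^{-1}\bigr)\bigl((I+rA)^{-1}e\bigr)\ge 0
$$
is correct (the computation $R'(z)=b^T(I-zA)^{-2}e$ via $I+zA(I-zA)^{-1}=(I-zA)^{-1}$ checks out) and does not appear in the paper. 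This buys you two things the paper's two-line proof leaves implicit: a self-contained demonstration that $r=\C$ is the binding constraint, and an actual proof of the claim $\gamma_*\ge 1$ in the theorem statement, via $0\le R(-\C)\le R(0)=1$, which the paper asserts but never argues. One caveat: your opening claim that $(A,b)$ is absolutely monotonic at every $-r$ with $r\in[0,\C]$ is itself exactly the content of the Kraaijevanger lemma the paper cites (equivalently, the definition of $\C$ as radius of absolute monotonicity), so you have relocated that ingredient from the relaxed method to the original one rather than eliminated it; under the paper's characterization of $\C$ as the largest $r$ with absolute monotonicity at the single point $-r$, you should state explicitly that you invoke the standard point-to-interval result for $(A,b)$. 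Your treatment of the degenerate case $R(-\C)=1$, giving $\gamma_*=\infty$, is a detail the paper omits.
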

\begin{proof}
    Combining Lemma \ref{SSPlemma} with a standard result on absolute monotonicity
    (see e.g. \cite[Lemma 3.1]{kraaijevanger1991}) we have that $(A,\gamma b)$ is absolutely
    monotonic on $[-\C(A,b), 0]$ as long as $R_\gamma(-\C(A,b))\ge 0$.
    Since $R_\gamma(z) - 1 = \gamma(R(z)-1)$, we obtain the condition stated in the theorem.
\end{proof}
Values of $\gamma_*$ are given in Table \ref{tbl:mu} for some well-known SSP methods.
For all of these methods, direct computation shows that $\C(A,\gamma b) = \C(A,b)$ for $0\le \gamma \le \gamma_*$,
while taking $\gamma > \gamma_*$ leads to a decrease in the SSP coefficient.

\begin{table}
\center
\begin{tabular}{cc}
Method & $\gamma_*$ \\ \hline
SSPRK(2,2) & 2 \\
SSPRK(s,2) & s/(s-1) \\
SSPRK(3,3) & 3/2 \\
SSPRK(4,3) & 1 \\
SSPRK(5,3) & 1 \\
SSPRK(9,3) & 1 \\
SSPRK(5,4) & 1.312 \\
SSPRK(10,4) & 25/24 \\ \hline
\end{tabular}
\caption{Values of $\gamma_*$ from Theorem \ref{SSPthm} for some well-known SSP methods.
For $0\le \gamma \le \gamma_*$, the relaxation method has the same SSP coefficient as the
original method.
\label{tbl:mu}}
\end{table}

\section{Numerical examples} \label{examples}
For conservative systems, stability is guaranteed under
any step size.  However, one expects the accuracy to deteriorate significantly if $\gamma$ is not
close to unity.  In the dissipative case, it is possible that $\gamma$ becomes negative and
stability is lost.  A careful examination of the analysis in the previous sections
suggests that step sizes on the order of what linearized stability analysis predicts
to be stable should be acceptable.

The following Runge--Kutta methods will be used in the numerical experiments.
\begin{itemize}
  \item
  SSPRK(2,2): Two-stage, second-order SSP method of \cite{shu1988}.

  \item
  SSPRK(3,3): Three-stage, third-order SSP method of \cite{shu1988}.

  \item
  SSPRK(10,4): Ten-stage, fourth-order SSP method of \cite{ketcheson2008highly}.

  \item
  RK(4,4): Classical four-stage, fourth-order method.

  \item
  BSRK(8,5): Eight-stage, fifth-order method of \cite{bogacki1996efficient}.
  A fixed step size $\Dt$ is used and the embedded method is not used.
\end{itemize}
All of these methods have non-negative weights.  We refer to the
relaxation version of a method by replacing ``RK'' with ``RRK''; e.g. RRK(4,4).

\subsection{Linear, skew-Hermitian system}
With this example we investigate the behavior of RRK methods for linear skew-Hermitian
problems.  For concreteness, we consider the advection equation
\begin{align}
    U_t & = U_x & U(x,0) = U^0(x)
\end{align}
with periodic boundary conditions over a spatial interval of length $2\pi$
discretized in space by a Fourier spectral collocation method
with $m$ points.  This results in a linear, constant-coefficient system of ODEs:
$$
    u'(t) = Du(t)
$$
where $u_j(t) \approx U(x_j,t)$ at evenly spaced points $x_j$ and $D$ is the
$m\times m$ skew-Hermitian Fourier spectral differentiation matrix.
Since $D$ is normal, the behavior of any RK method on this problem
can be characterized simply in terms of the eigenvalues of $D$ and the corresponding
eigenvectors, which are discrete Fourier modes:
\begin{align*}
    \lambda_\xi & = i \xi & \xi = -\frac{m}{2}, -\frac{m}{2}+1, \dots,  \frac{m}{2}-1 \\
    v_\xi & = [\exp(i \xi x_1), \exp(i \xi x_2), \dots, \exp(i\xi x_m)]^T.
\end{align*}
Let us express the initial data in terms of these modes:
$$
u^0 = \sum_\xi \hat{u}^0_\xi v_\xi,
$$
where $\hat{u}$ is the discrete Fourier transform (DFT) of $u$.
Then the exact solution of the semi-discrete system is given by
$$
u(t) = \sum_\xi e^{i \xi t} \hat{u}^0_\xi v_\xi.
$$
Thus the energy associated with each mode is constant in time.
Applying a Runge--Kutta method we instead obtain the solution
$$
u^n = \sum_\xi R(i \xi \Dt)^n \hat{u}^0_\xi v_\xi.
$$
where $R(z)$ is again the stability function of the method.
The energy associated with mode $\xi$ is modified
by the factor $|R(i\xi\Dt)|$ at each step.  The maximum stable step size is
the value that guarantees $i \xi \Dt \in S(A,b)$ for all $\xi$.
This is just
$$
    \Dt_{\max} = \frac{I(A,b)}{\max_\xi|\lambda_\xi|} = \frac{2}{m} I(A,b),
$$
where $I(A,b)$ is the length of the method's imaginary axis stability interval.
In the following experiments, we use a step size $\Dt = \mu \Dt_{\max}$.
Using a given (standard) RK method and $0 \le \mu \le 1$, we have absolute stability and
the energy of each mode decays.  This is illustrated in Figure \ref{fig:rk4_energydecay},
where we solve with $m=128$ and plot the relative change in amplitude
$$
\frac{|\hat{u}^n_\xi| - |\hat{u}^0_\xi|}{|\hat{u}^0_\xi|}
$$
for each mode, for a range of time step sizes $\mu \Dt_{\max}$.  Due to symmetry we plot only
the positive wavenumbers.  This figure does not depend on the initial data
but only on $|R(iy)|^N$, where $N$ is the total number of time steps taken.
For larger values of $\mu$, the high wavenumber modes are strongly damped.

Now let us consider what happens when applying a relaxation Runge--Kutta
method with $\gamma_n$ chosen according to \eqref{gammadefreal} so that energy is
conserved.  At each step,
the energy in mode $\xi$ is modified by the factor $|R_\gamma(i \xi \Dt)|$.
If the initial data is chosen to consist of a single wavenumber $k_\xi$, then $\gamma_n$
will take a value such that $|R_\gamma(i\xi \Dt)|=1$, and the same value of
$\gamma_n$ will be used at every step.  For more general initial data,
$\gamma_n$ is chosen precisely so that the change
in energy when summed over all modes is zero.  This value depends on the data,
so $\gamma_n$ will be different at each step.  Furthermore, at each step some
modes will be diminished while others will grow.
This is illustrated in
Figure \ref{fig:rrk4_energydecay}, which is analogous to Figure \ref{fig:rk4_energydecay}
but for the energy-conserving RRK(4,4) method, with initial data taken as white noise;
i.e. $\hat{u}^0_j = e^{i\theta_j}$ where the phases $\theta_j$ are random.
We see that high-wavenumber modes
are again damped, especially for step sizes close to the stability limit.
Meanwhile, some lower-wavenumber modes are amplified in order to preserve the
total energy.
If we instead take initial data that is reasonably well-resolved, such as
\begin{align} \label{sech2IC}
    U^0(x) = \sech^2(7.5(x+1)).
\end{align}
the resulting amplification curves are nearly indistinguishable from
those of the standard RK4 method (Figure \ref{fig:rk4_energydecay}), as shown
in Figure \ref{fig:rrk4_energydecay2}.
This is because most of the energy is in the low-wavenumber modes, which
are propagated fairly accurately by the standard RK4 method, so little
compensation is need in order to conserve energy.
In the latter figure we also plot the amplification for $\mu=1.02$, which is
beyond the absolute stability limit.  We see that in this case the RRK method
greatly amplifies the highest-wavenumber mode.

\begin{figure}
    \centering
    \begin{subfigure}{\textwidth}
      \centering
      \includegraphics[width=\textwidth]{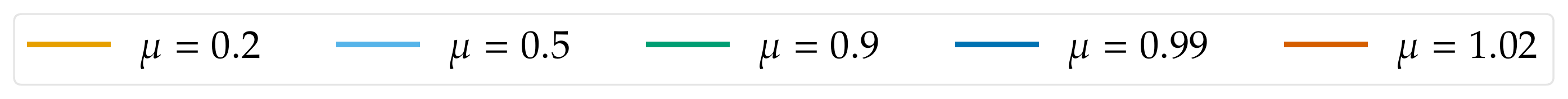}
    \end{subfigure}
    \\
    \begin{subfigure}[t]{0.32\textwidth}
        \centering\captionsetup{width=.8\linewidth}
        \includegraphics[width=\textwidth]{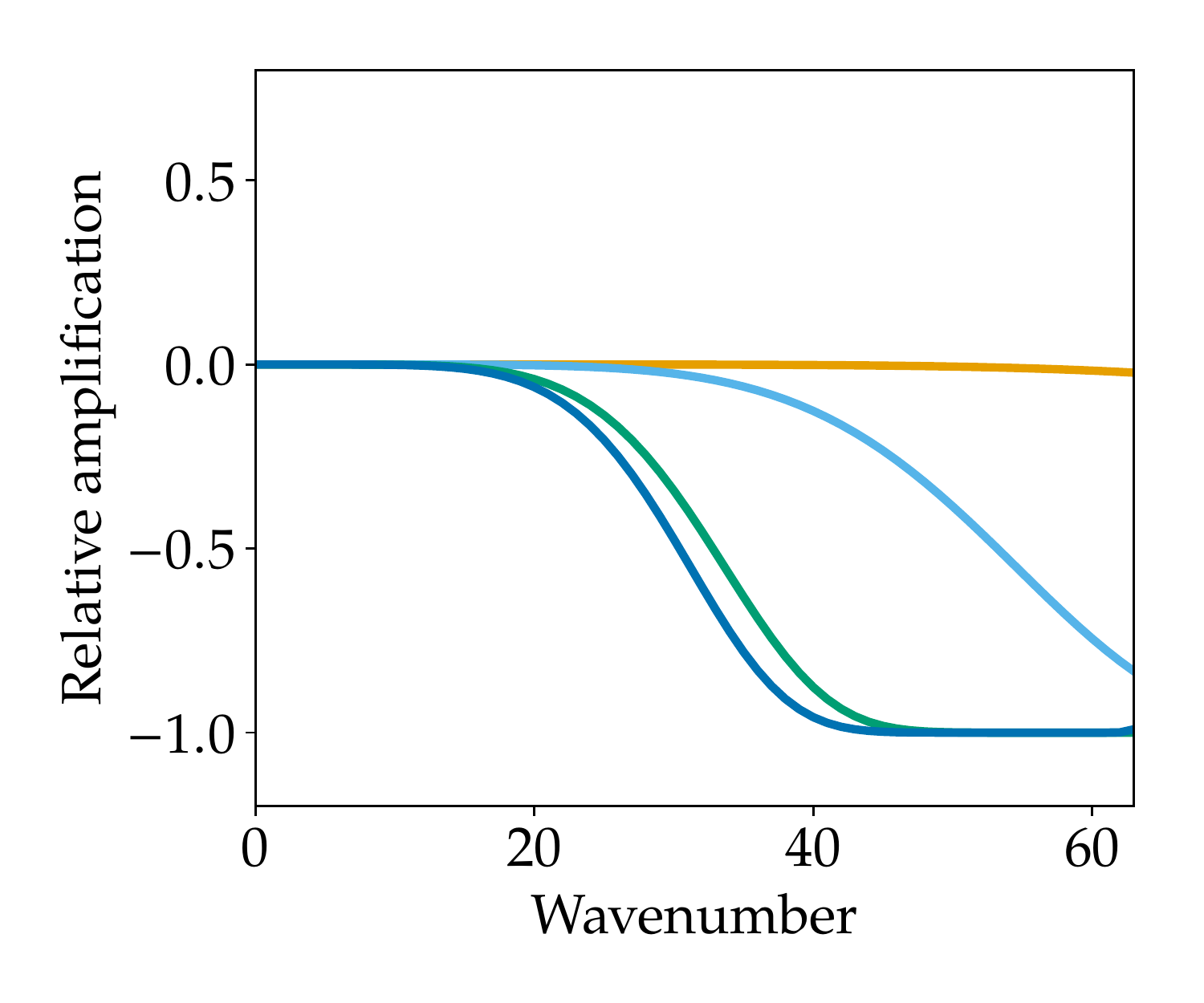}
        \caption{Standard RK(4,4) method
        \label{fig:rk4_energydecay}}
    \end{subfigure}
    \begin{subfigure}[t]{0.32\textwidth}
        \centering\captionsetup{width=.8\linewidth}
        \includegraphics[width=\textwidth]{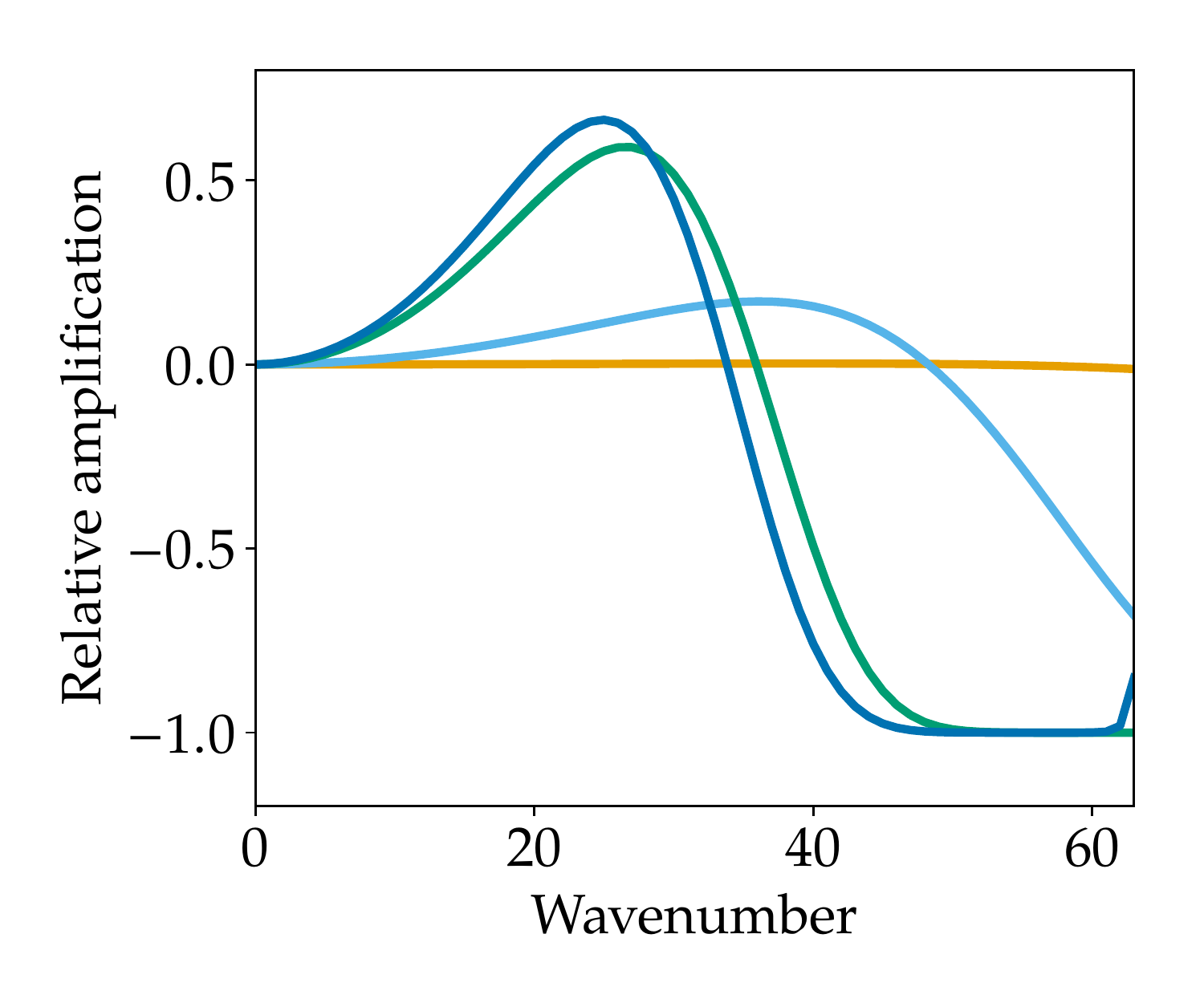}
    \caption{Energy-conserving RRK(4,4) method, white noise initial data
    \label{fig:rrk4_energydecay}}
    \end{subfigure}
    \begin{subfigure}[t]{0.32\textwidth}
        \centering\captionsetup{width=.8\linewidth}
        \includegraphics[width=\textwidth]{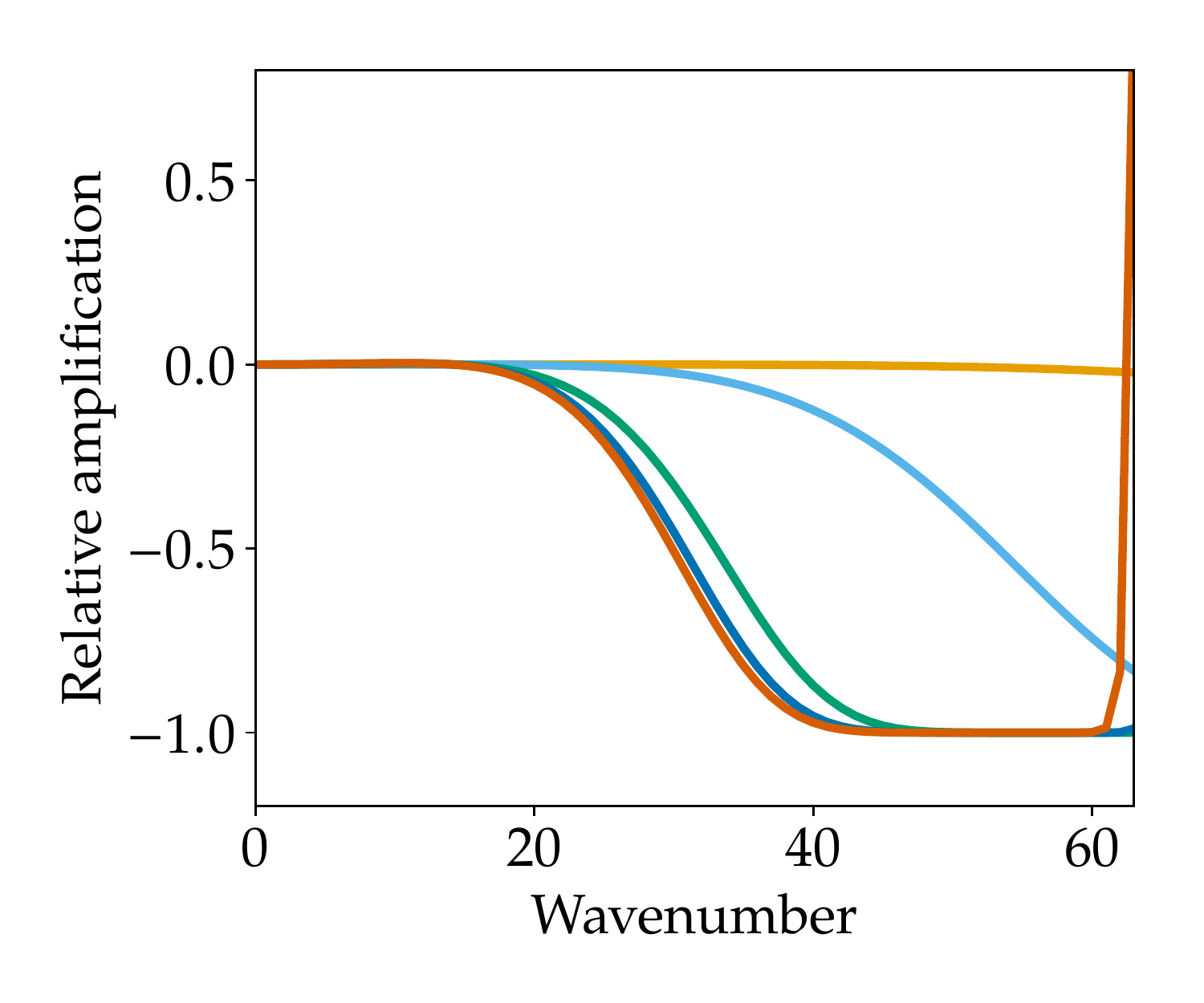}
    \caption{Energy-conserving RRK(4,4) method; smooth initial data \eqref{sech2IC}
    \label{fig:rrk4_energydecay2}}
    \end{subfigure}
    \caption{Relative amplification of each mode for the spectral semi-discretization
    of the advection equation, integrated up to $t=1$, for step sizes $\Dt = \mu\Dt_{\max}$.
    }
\end{figure}

In Figure \ref{fig:waveeq_comparison}, we compare the behavior of RK(4,4) and relaxation RK(4,4) (RRK(4,4))
for different values of $\mu$.  For $\mu < 1$, the methods give similar solutions.
Unlike RK(4,4), RRK(4,4) remains stable for $\mu>1$.  However,
taking $\mu=1.016$ leads to highly inaccurate and oscillatory approximations.

\begin{remark}
    Even for the linearly unstable value $\mu=1.016$, we have found that $\gamma$ remains
    within less than $10^{-2}$ of unity and the solution never blows up.
    For much larger values ($\mu \gtrsim 1.25$)
    the value of $\gamma$ tends to zero after a few steps and the calculation is never
    completed.
\end{remark}

\begin{figure*}
    \centering
    \begin{subfigure}[t]{0.5\textwidth}
        \centering
        \includegraphics[width=\textwidth]{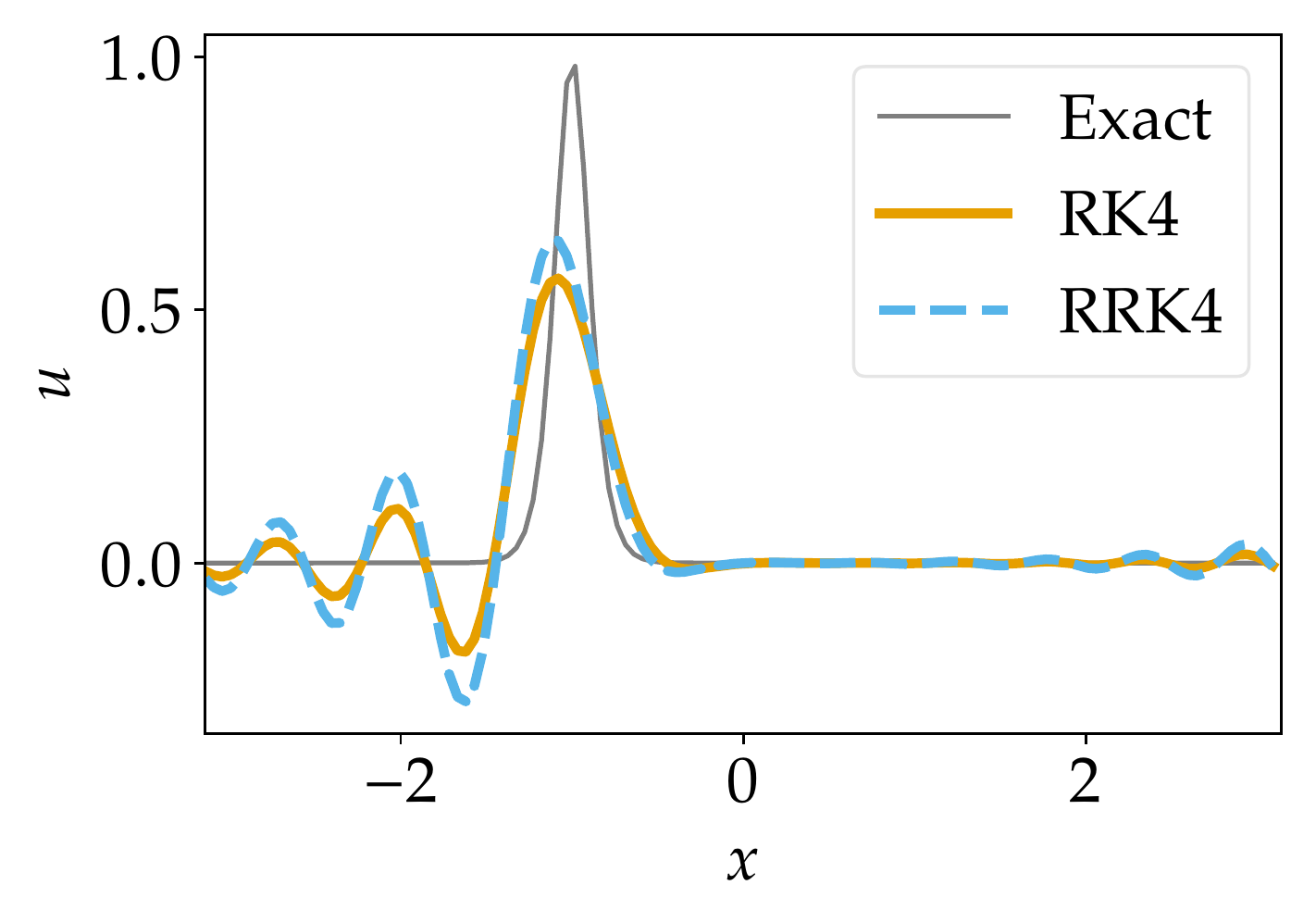}
        \caption{RK(4,4) vs. RRK(4,4); $\mu=0.99$.}
    \end{subfigure}%
    \begin{subfigure}[t]{0.5\textwidth}
        \centering
        \includegraphics[width=\textwidth]{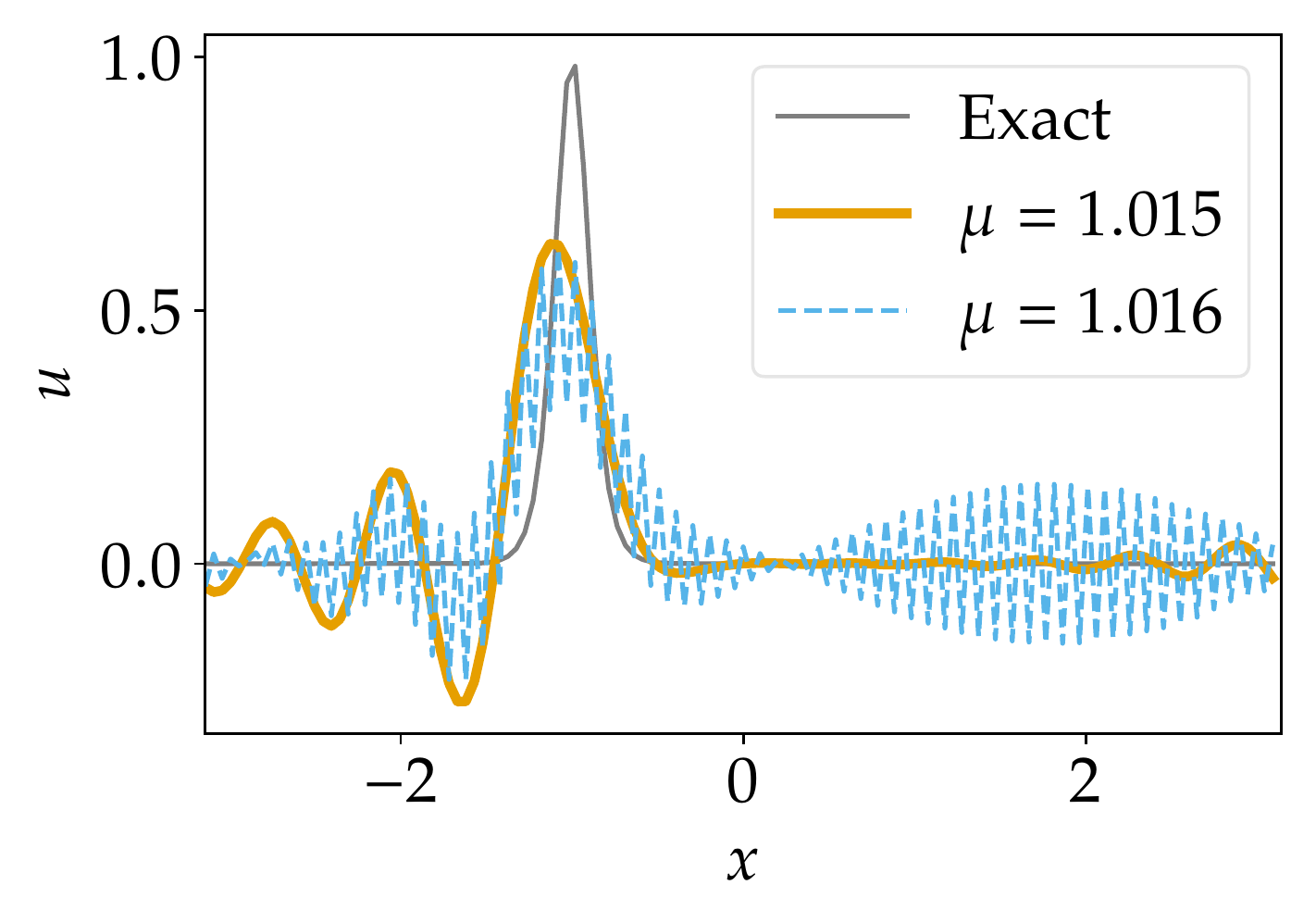}
        \caption{RRK(4,4) with two slightly different step sizes}
    \end{subfigure}
    \caption{Solutions to the advection equation at $t=400\pi$, using standard RK(4,4)
             and the energy-conserving relaxation modification of RK(4,4).
             The absolute stability limit corresponds to $\mu=1$.
             The two methods give very similar solutions for all stable
             step sizes.  The RK(4,4) solution blows up for $\mu>1$; the RRK(4,4)
             solution is stable for all step sizes but becomes highly oscillatory
             for $\mu > 1.015$.
             \label{fig:waveeq_comparison}}
\end{figure*}

\subsection{A non-normal linear autonomous problem}
Here we consider a linear autonomous problem $u'(t) = Au(t)$
with non-normal right-hand side introduced by Sun \& Shu \cite{sun2017}:
\begin{align}
    \begin{bmatrix} u_1 \\ u_2 \\ u_3 \end{bmatrix}' & =
    \begin{bmatrix} -1 & -2 & -2 \\ 0 & -1 & -2 \\ 0 & 0 & -1 \end{bmatrix}
    \begin{bmatrix} u_1 \\ u_2 \\ u_3 \end{bmatrix}.
\end{align}
This problem is dissipative, but --- as shown in \cite{sun2017} ---
the classical 4th-order Runge--Kutta method RK(4,4) is not monotone for this
problem, no matter how small one takes $\Dt$.  To provide a concrete example,
we compute  $R(\Dt A)$, where $R(z)$ is
the stability polynomial of RK(4,4) and we choose step size $\Dt=0.5$.
The first singular value of the matrix $R(\Dt A)$ is approximately $1.001$.
Taking a single step with RK(4,4) and initial condition equal to the first right
singular vector thus leads to an
increase in the energy, as shown in Figure~\ref{fig:sunshu}.
The increase is even larger for the step size 0.7.
In contrast, RRK(4,4) preserves monotonicity with either step size.

\begin{figure}
\center
\includegraphics[width=3in]{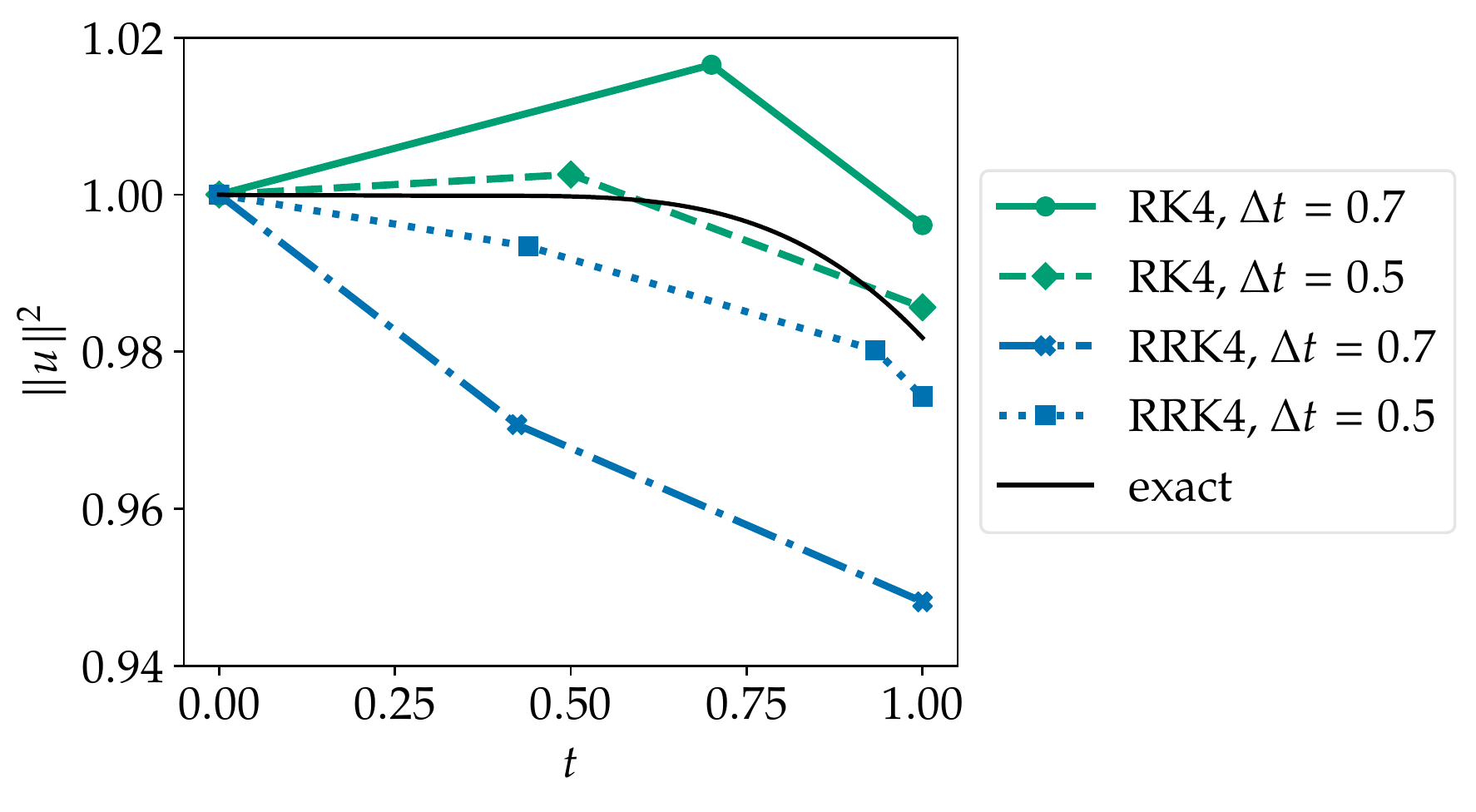}
\caption{Standard RK(4,4) and its energy-conserving modification RRK(4,4) applied to
the test problem of Sun \& Shu\cite{sun2017}.
The RK(4,4) solution gives an increased energy at the first step, while the RRK(4,4)
solution is monotone.
\label{fig:sunshu}}
\end{figure}

\subsection{Nonlinear oscillator}
Here we consider the problem
\begin{equation} \label{prob1}
  \begin{bmatrix} u_1 \\ u_2 \end{bmatrix}'
  =
  \frac{1}{\| u \|^2}
  \begin{bmatrix} -u_2 \\ u_1 \end{bmatrix},
  \quad
  \begin{bmatrix} u_1(0) \\ u_2(0) \end{bmatrix}
  =
  \begin{bmatrix} 1 \\ 0 \end{bmatrix},
\end{equation}
with analytical solution
\begin{equation*}
  \begin{bmatrix} u_1(t) \\ u_2(t) \end{bmatrix}
  =
  \begin{bmatrix} \cos(t) \\ \sin(t) \end{bmatrix}.
\end{equation*}
Although energy is conserved in the exact solution, the widely used
SSPRK(3,3) method of Shu \& Osher \cite{shu1988} produces a solution whose energy is
monotonically increasing for every positive
step size \cite{ranocha2018strong}.  Similar behavior is observed for several other explicit RK
methods we have tested, as shown in Figure \ref{fig:prob1_energy_RK}.
By applying the modification described in the present work, we obtain instead
Figure \ref{fig:prob1_energy_RRK}, showing that energy is conserved up to roundoff
error for all methods.
\begin{figure}
    \centering
    \begin{subfigure}{\textwidth}
      \centering
      \includegraphics[width=\textwidth]{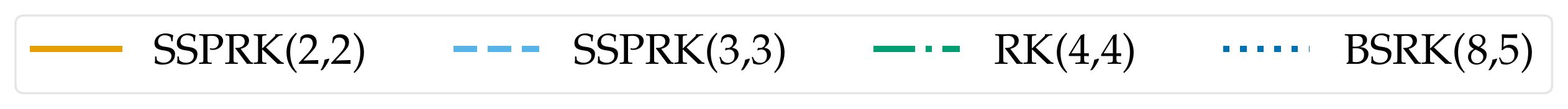}
    \end{subfigure}
    \\
    \begin{subfigure}[b]{0.49\textwidth}
        \includegraphics[width=\textwidth]{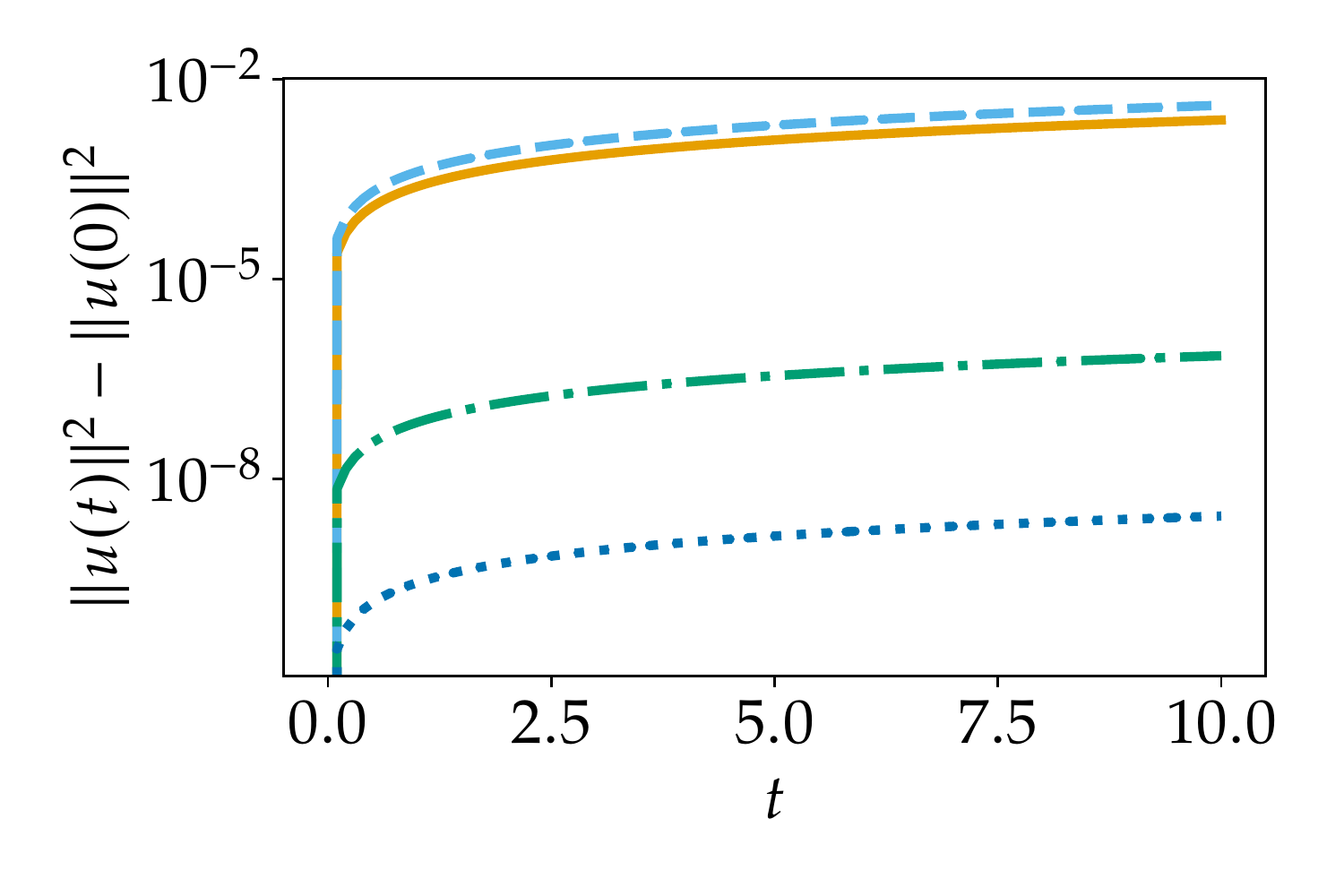}
        \caption{Standard RK methods\label{fig:prob1_energy_RK}}
    \end{subfigure}~
    \begin{subfigure}[b]{0.49\textwidth}
        \includegraphics[width=\textwidth]{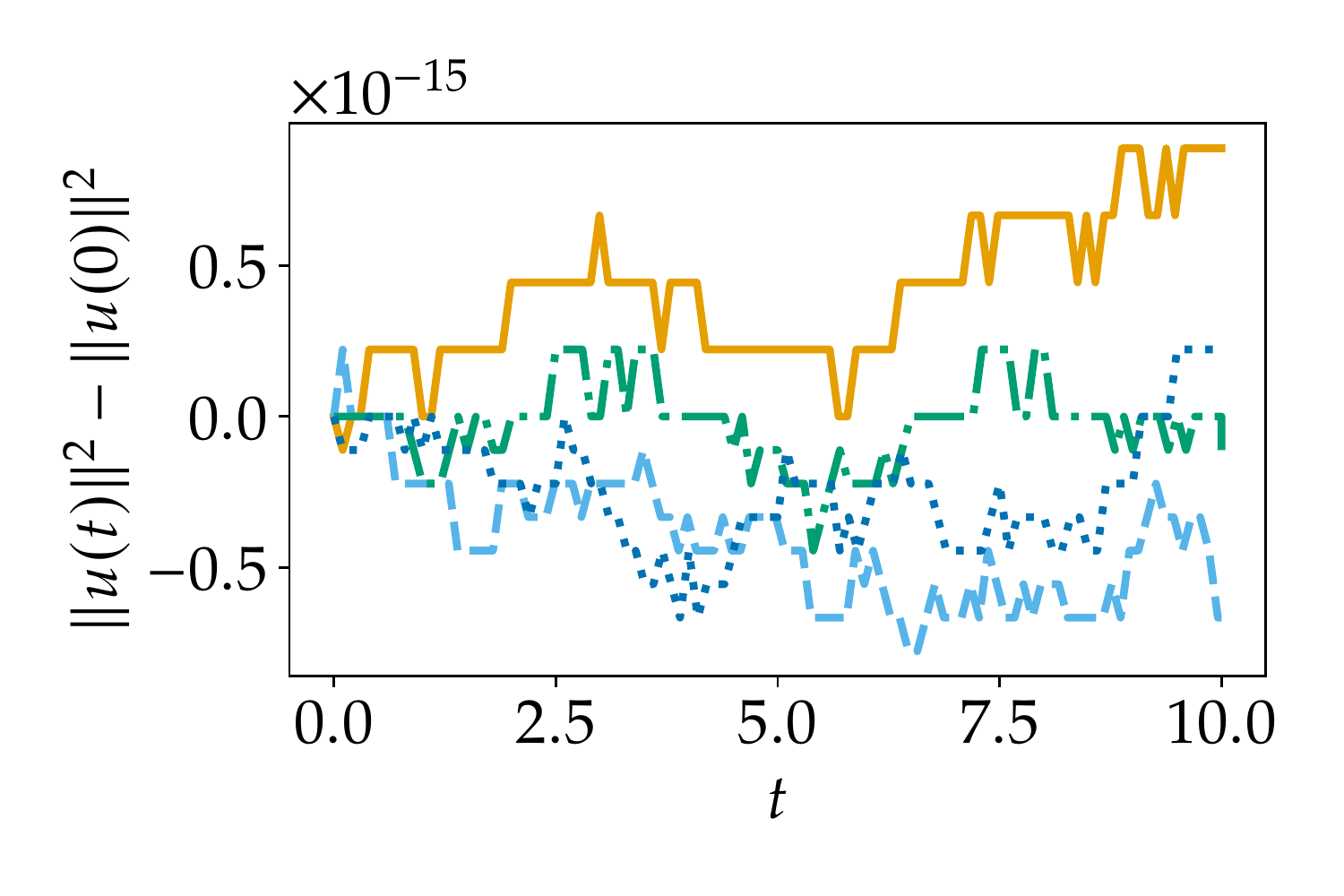}
    \caption{Relaxation RK methods\label{fig:prob1_energy_RRK}}
    \end{subfigure}
    \caption{Evolution of the energy in the numerical solution of \eqref{prob1}.
            Standard RK methods exhibit energy growth or decay, while relaxation
            methods conserve energy to within roundoff error.  Note that the scale
            of the left figure is logarithmic, while the scale of the right figure
            is linear.
    }
\end{figure}

Figure \ref{fig:prob1_convergence} shows the convergence for each of the standard RK methods
(solid lines) and its energy-conserving modification (dashed lines).  We see that
the relaxation method is in each case on par with or more accurate than the
original method.

\begin{figure}
\center
\includegraphics[width=2.5in]{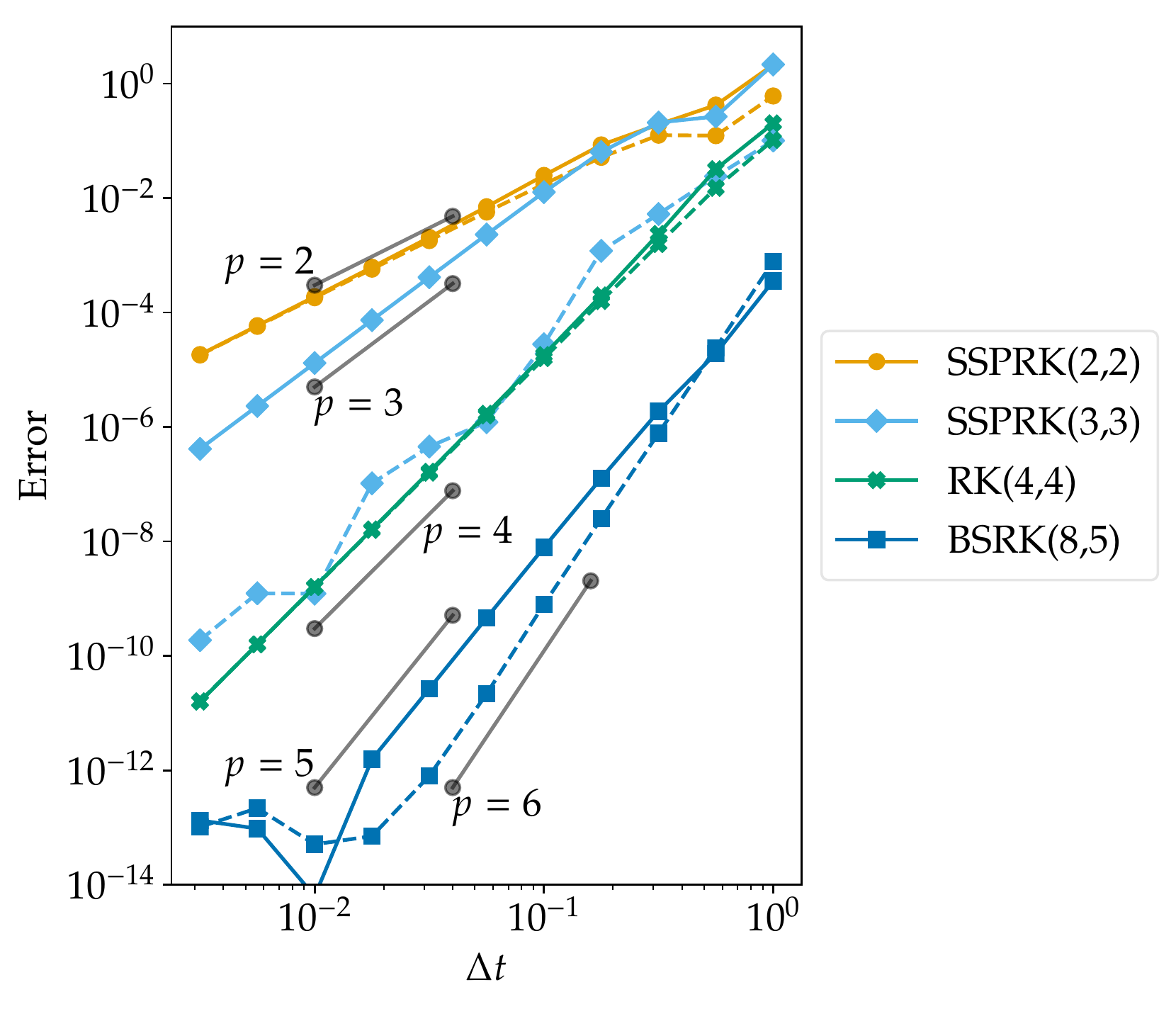}
\caption{Convergence of several methods and their energy-conserving relaxation
variants on the test problem \eqref{prob1}.  Solid lines are standard RK methods;
dashed lines are relaxation RK methods.
\label{fig:prob1_convergence}}
\end{figure}

\subsection{Burgers' equation}
We solve the inviscid Burgers' equation
\begin{align} \label{burgers}
    U_t  + \frac{1}{2}(U^2)_x & = 0
\end{align}
on the interval $-1 \le x \le 1$ with periodic boundary conditions and
initial data
$$
U(x,0) = \exp(-30x^2)
$$
with the flux-differencing discretization
$$
    u_i'(t) = - \frac{1}{\Delta x} (F_{i+1/2} - F_{i-1/2})
$$
where $F_{i\pm 1/2}$ is the numerical flux, defined below.
The spatial domain is discretized with 50 equally-spaced points.
In the convergence tests below, this spatial discretization
is held fixed while the time step is varied, in order to
investigate only the temporal convergence.

\subsubsection{Energy-conservative semi-discretization}
We take the second-order accurate symmetric flux \cite{tadmor2003entropy}
\begin{align} \label{burgers_cons_flux}
F_{i+1/2} = \frac{u_i^2 + u_i u_{i+1} + u_{i+1}^2}{6}
\end{align}
which yields a conservative semi-discrete system.
Because of the lack of numerical viscosity,
the semi-discrete solution is not the vanishing-viscosity
solution, but instead develops a dispersive shock.

Energy evolution up to $t=0.2$ is shown in Figures \ref{fig:burgersE}.
As expected, standard RK methods all exhibit significant growth or
dissipation of energy, while RRK methods preserve energy up to roundoff error.
Convergence results at $t=0.03$ (just before shock formation) are shown in
Figure \ref{fig:burgersC}, where we compare the IDT approach with the full relaxation
approach.
All RRK methods achieve the order of accuracy of the corresponding RK method,
whereas for IDT methods the convergence rate is reduced by one, as predicted
by Theorem \ref{orderthm1}.

\begin{figure}
    \centering
    \begin{subfigure}[b]{0.49\textwidth}
        \includegraphics[width=\textwidth]{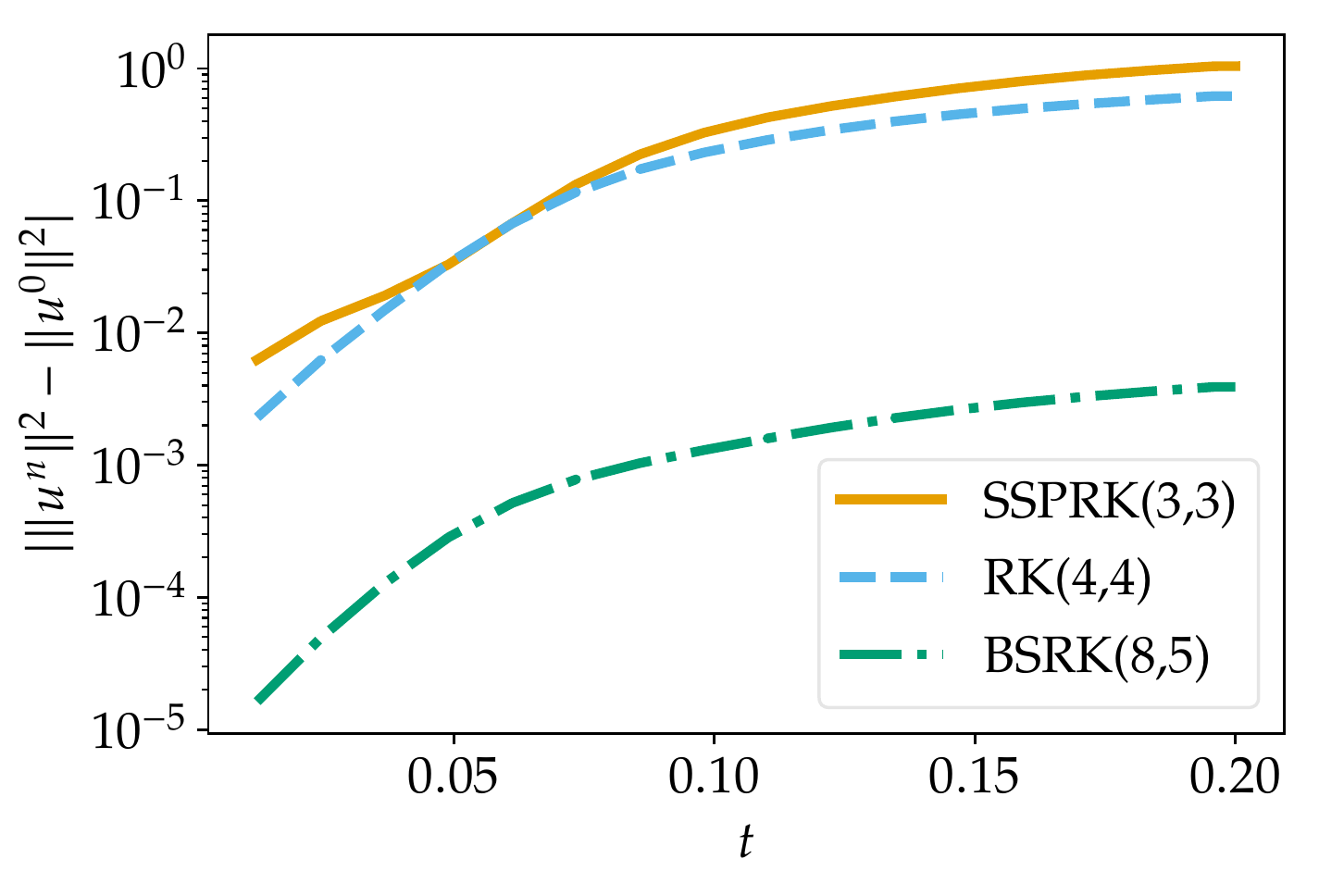}
        \caption{Standard RK methods\label{fig:burgers_energy_RK}}
    \end{subfigure}~
    \begin{subfigure}[b]{0.49\textwidth}
        \includegraphics[width=\textwidth]{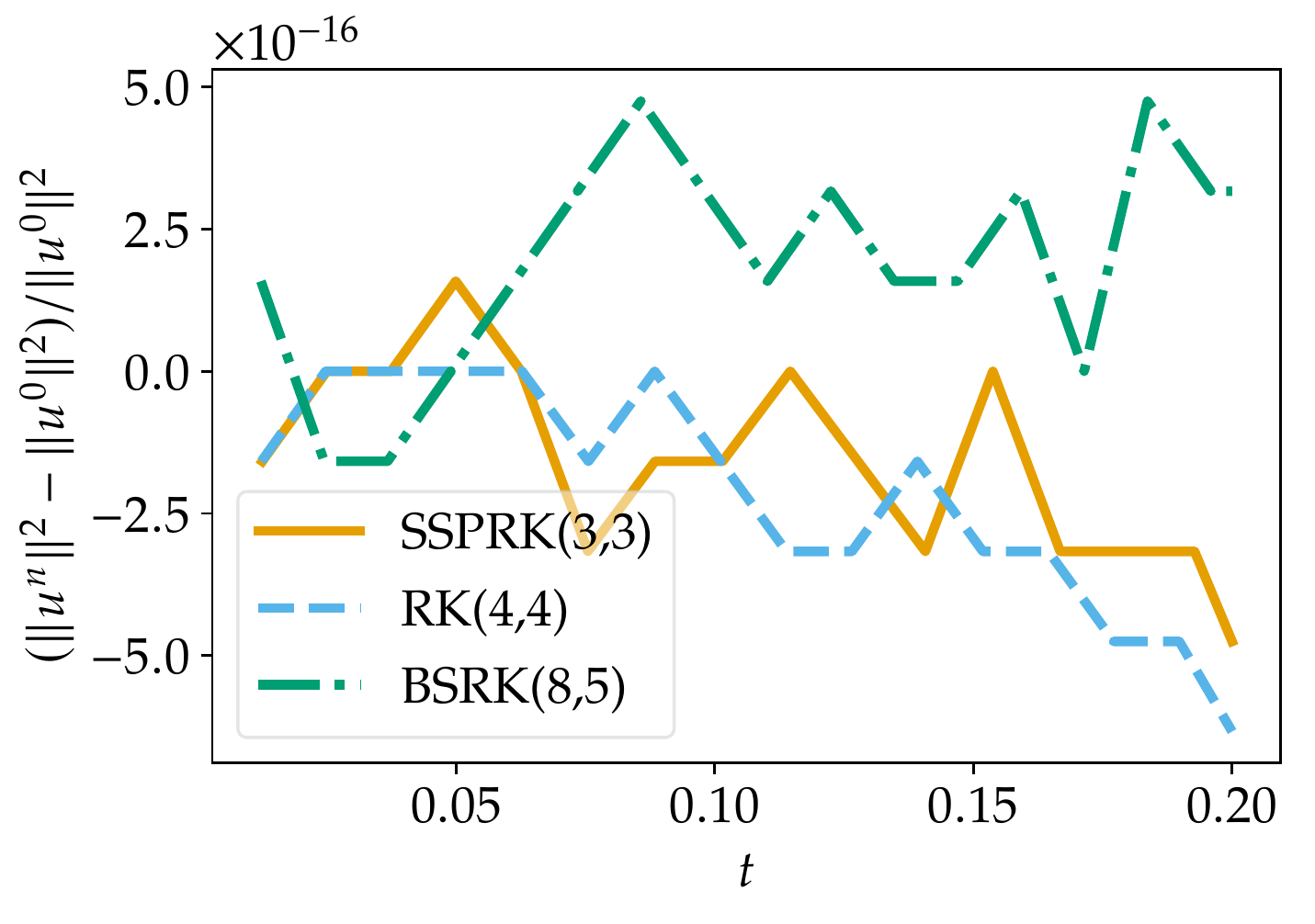}
    \caption{Relaxation RK methods\label{fig:burgers_energy_RRK}}
    \end{subfigure}
    \caption{Evolution of the energy in the numerical solution of Burgers'
            equation \eqref{burgers} with energy-conservative flux \eqref{burgers_cons_flux},
            using $\Dt = 0.3\Delta x$.
            Standard RK methods exhibit energy growth or decay, while relaxation
            methods conserve energy to within roundoff error.  Note that the vertical scale
            of the left figure is logarithmic, while the scale of the right figure
            is linear.
            \label{fig:burgersE}}
\end{figure}

\begin{figure}
    \centering
    \begin{subfigure}[b]{0.49\textwidth}
        \includegraphics[width=\textwidth]{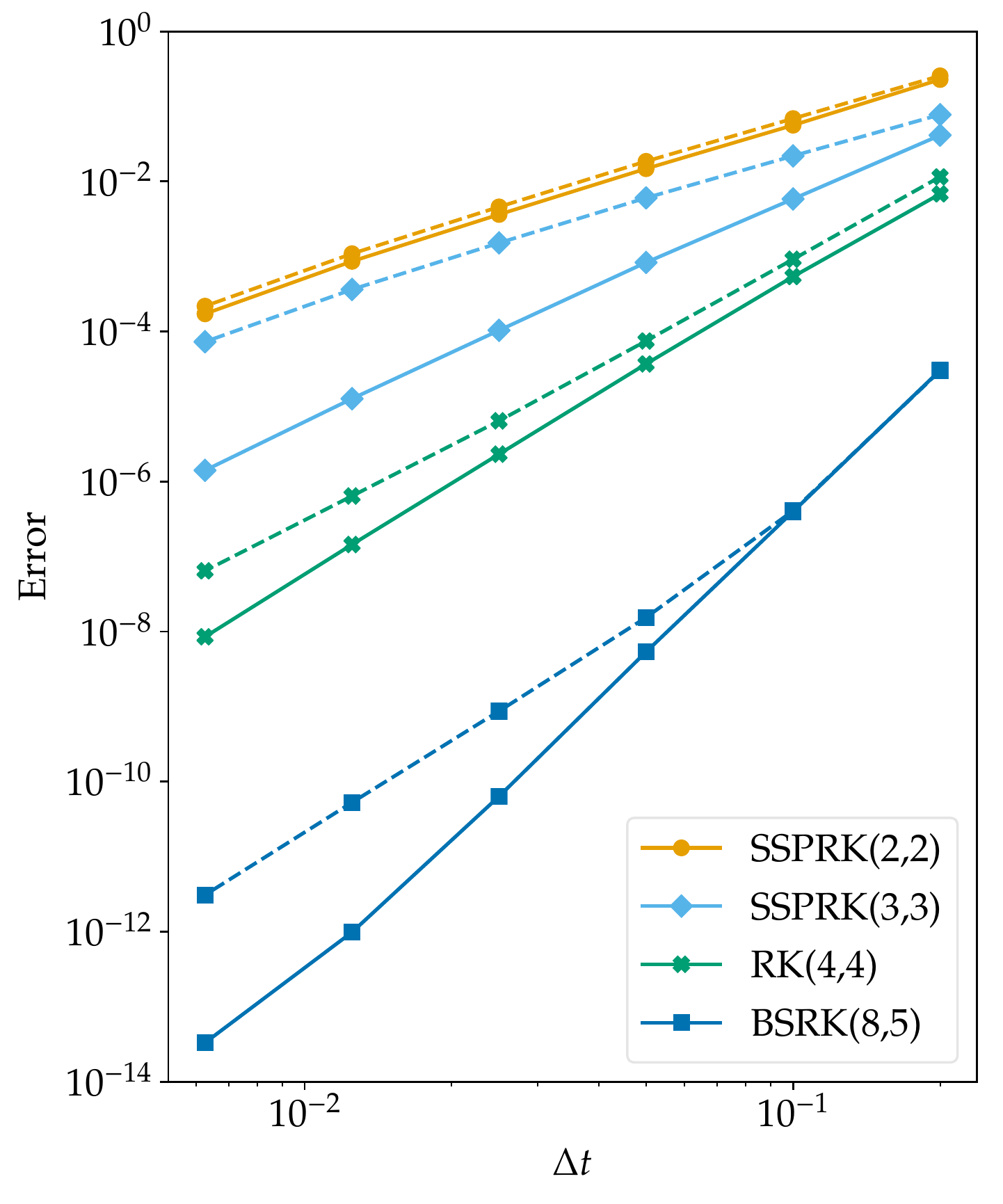}
        \caption{IDT methods\label{fig:burgers_convergence}}
    \end{subfigure}~
    \begin{subfigure}[b]{0.49\textwidth}
        \includegraphics[width=\textwidth]{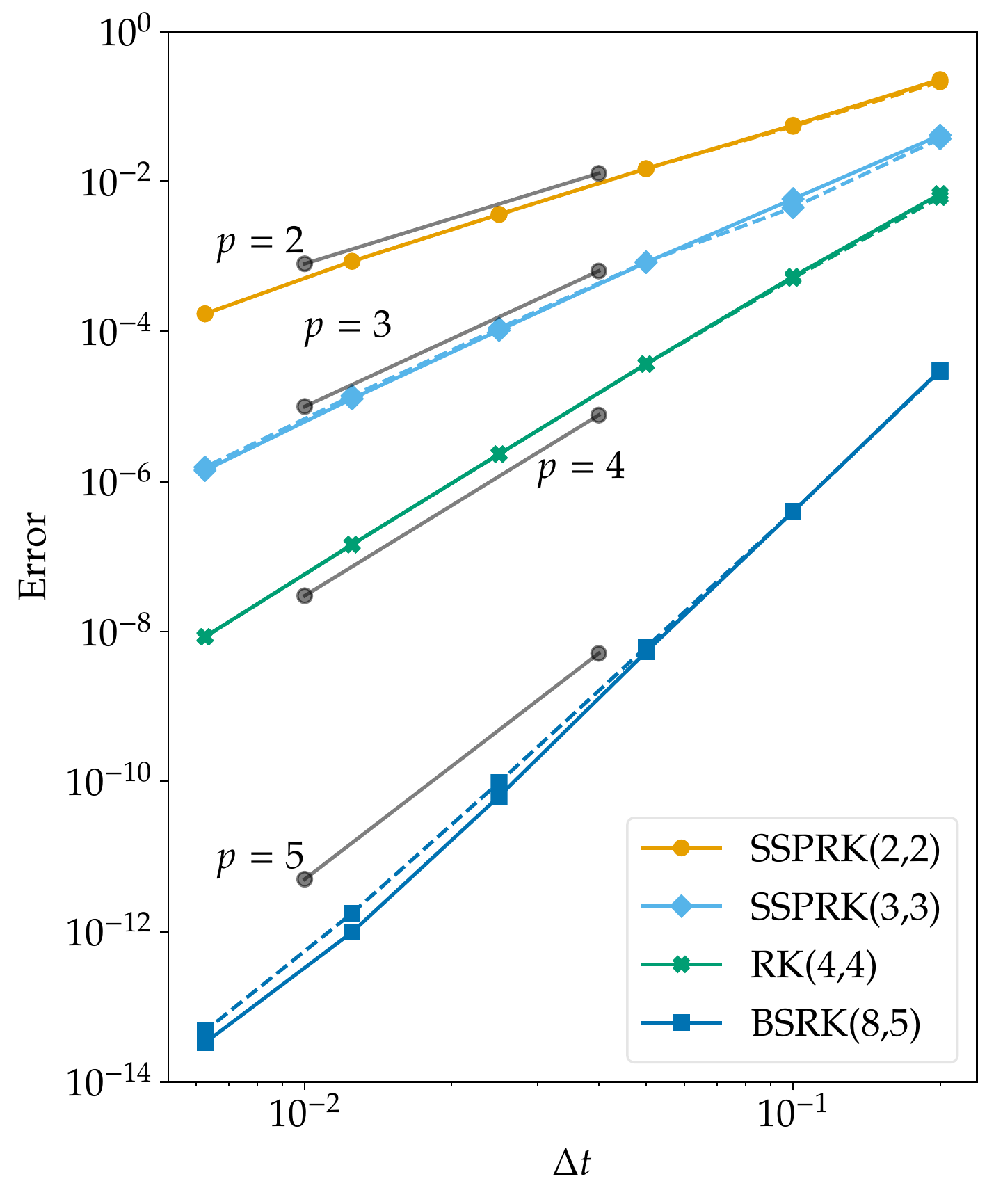}
    \caption{Relaxation methods\label{fig:burgers_convergence_fixedstep}}
    \end{subfigure}
\caption{Convergence for Burgers' equation with energy-conservative flux,
using several standard RK methods (solid lines) and their energy-conserving modifications
(dashed lines).
The value of $\Delta x$ is fixed and the solution is computed at $t=0.03$, just before
the time of shock formation.  Without step size rescaling, the rate of
convergence is reduced by one in some cases as indicated in Theorem \ref{orderthm1}.
With step size rescaling, in most cases the standard and relaxed RK methods give
almost exactly the same accuracy.
\label{fig:burgersC}}
\end{figure}

\subsubsection{Energy-dissipative semi-discretization}
We obtain a dissipative system by adding a centered difference to the flux:
\begin{align} \label{burgers_diss_flux}
F_{i+1/2} = \frac{u_i^2 + u_i u_{i+1} + u_{i+1}^2}{6} - \epsilon(u_{i+1} - u_{i}).
\end{align}
The amount of dissipation
is controlled by $\epsilon >0$.  The scheme is still consistent with \eqref{burgers}
since the amount of dissipation is proportional to $\Delta x$.  We take
$\epsilon = 1/100$.  With this dissipative flux, the solution develops a
viscous shock.

Results are shown in Figures \ref{fig:burgersdissE} and \ref{fig:burgersdissC}.
What is most interesting is that applying the relaxation approach dramatically
improves the numerical approximation of the global dissipation.
Convergence results are similar to those obtained with the conservative flux.

\begin{figure}
    \centering
    \begin{subfigure}[b]{0.49\textwidth}
        \includegraphics[width=\textwidth]{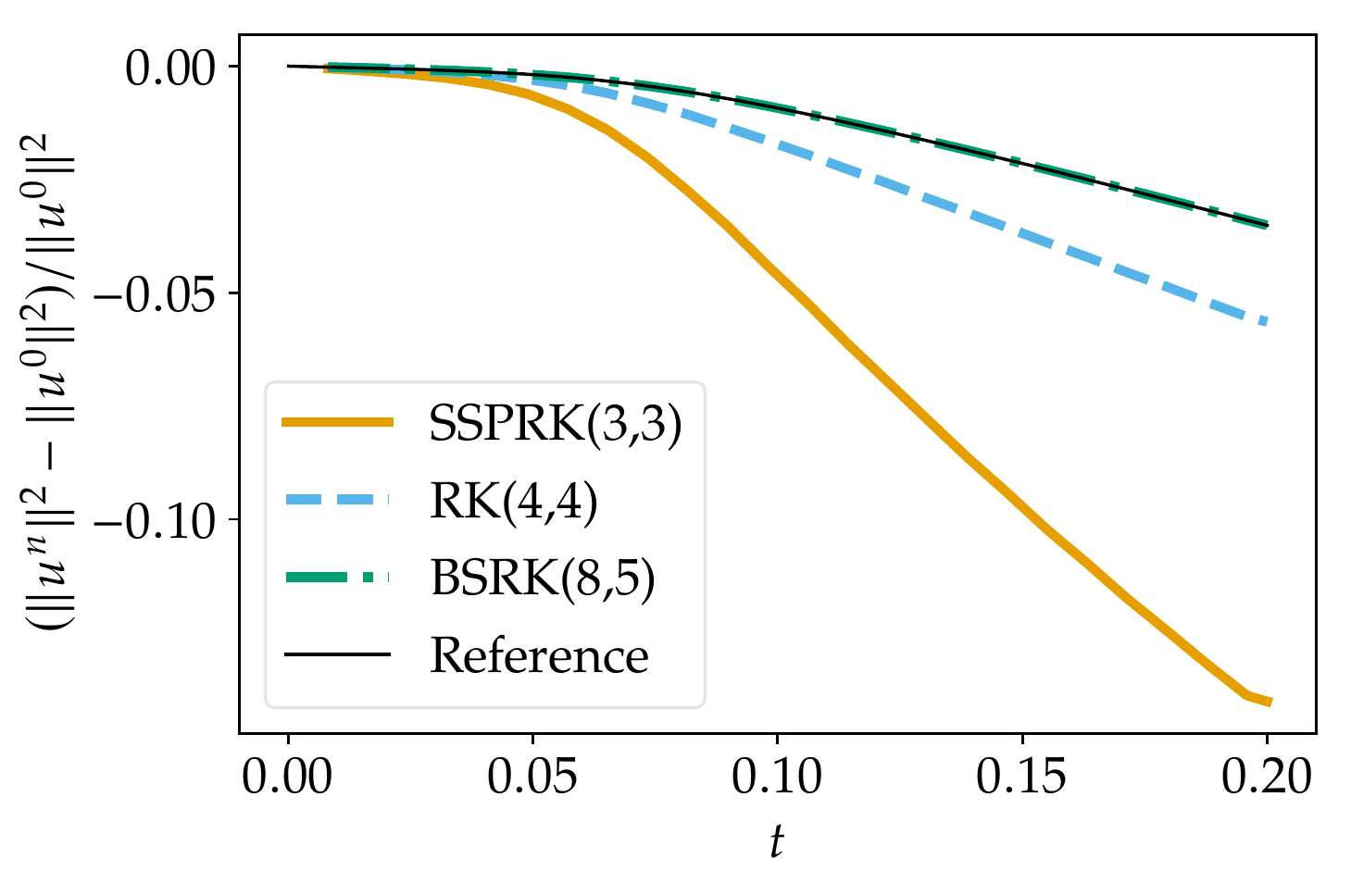}
        \caption{Standard RK methods\label{fig:burgersdiss_energy_RK}}
    \end{subfigure}~
    \begin{subfigure}[b]{0.49\textwidth}
        \includegraphics[width=\textwidth]{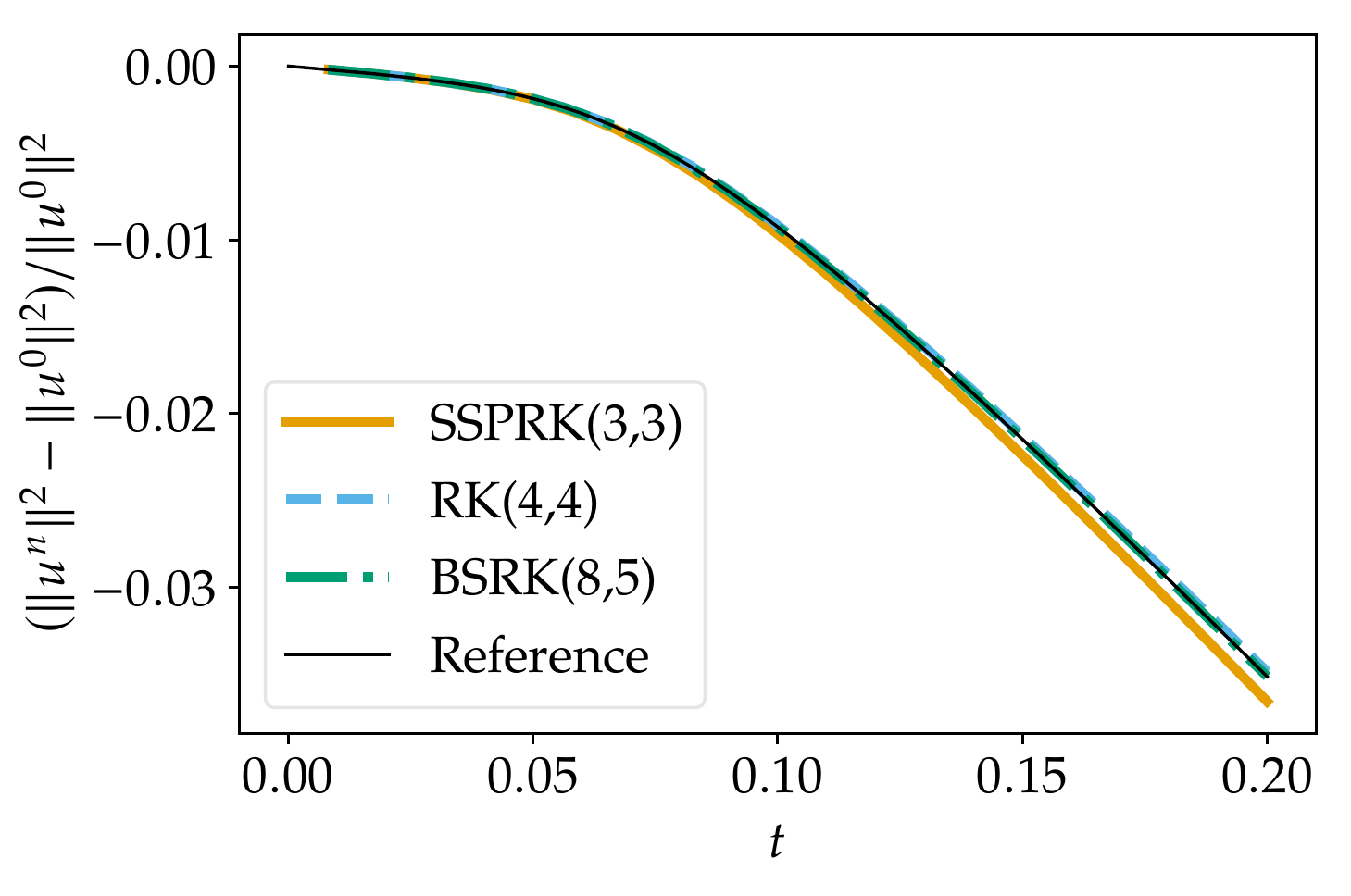}
    \caption{Relaxation RK methods\label{fig:burgersdiss_energy_RRK}}
    \end{subfigure}
    \caption{Evolution of the energy in the numerical solution of Burgers'
            equation \eqref{burgers} with energy-dissipative flux \eqref{burgers_diss_flux},
            with a $\Delta t = 0.2 \Delta x$.
            Standard RK methods (left) exhibit excessive dissipation, due to numerical
            errors.  Relaxation methods (right) approximate the correct energy evolution much more
            accurately.
            \label{fig:burgersdissE}}
\end{figure}

\begin{figure}
    \centering
    \begin{subfigure}[b]{0.49\textwidth}
        \includegraphics[width=\textwidth]{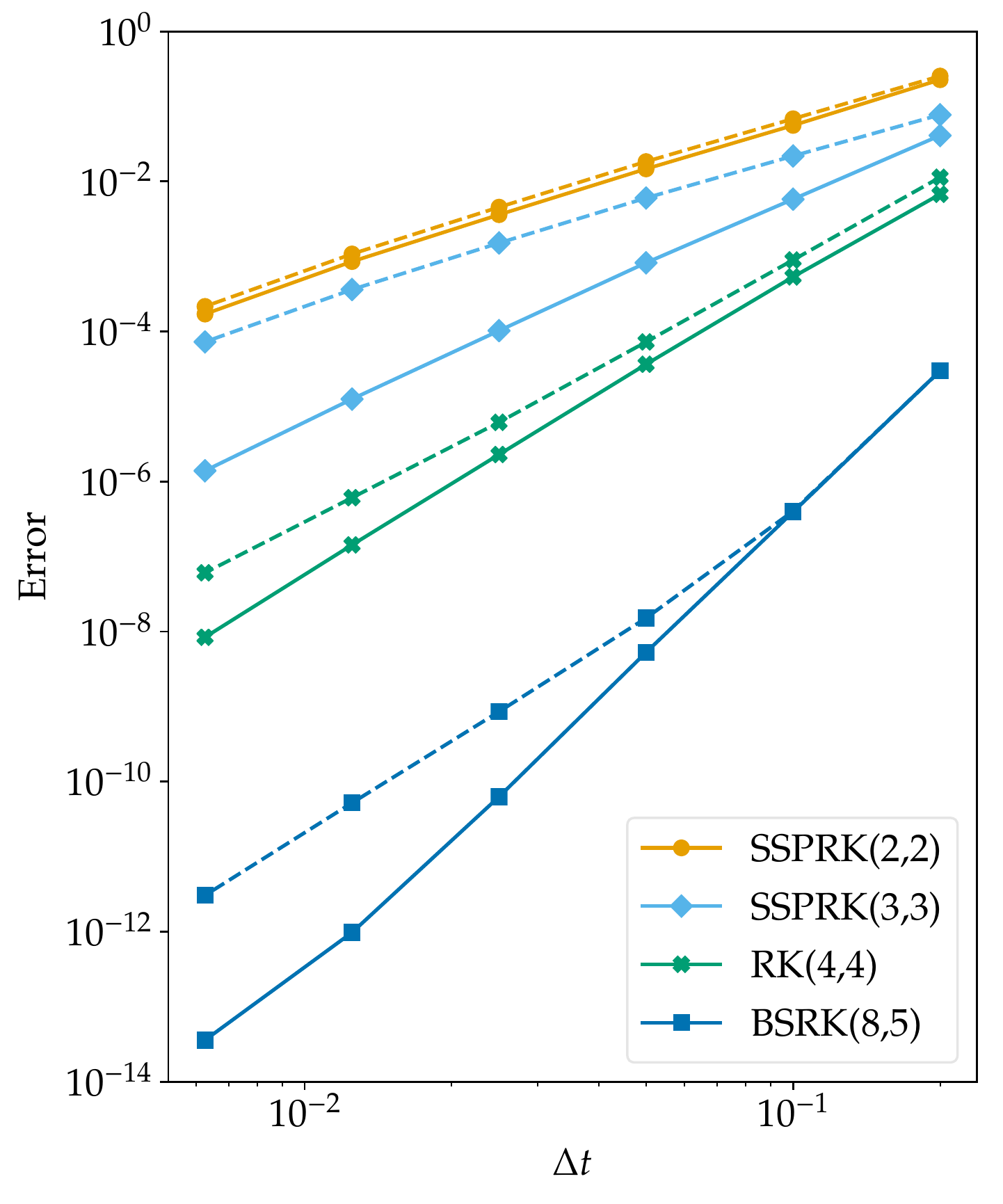}
        \caption{IDT methods\label{fig:burgersdiss_convergence}}
    \end{subfigure}~
    \begin{subfigure}[b]{0.49\textwidth}
        \includegraphics[width=\textwidth]{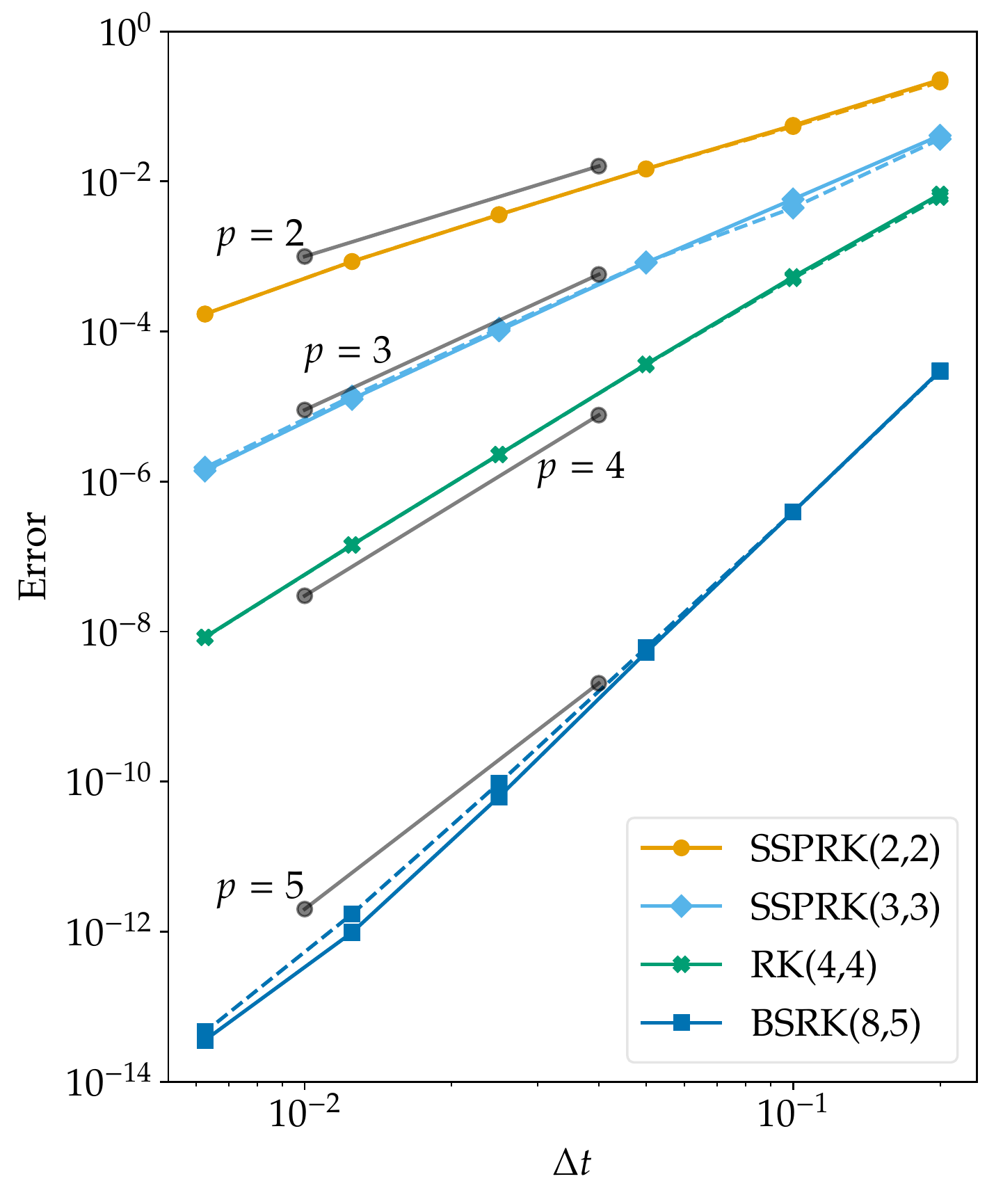}
    \caption{Relaxation methods\label{fig:burgersdiss_convergence_fixedstep}}
    \end{subfigure}
\caption{Convergence for Burgers' equation with energy-dissipative flux,
using several standard RK methods (solid lines) and their energy-conserving modifications
(dashed lines).
The value of $\Delta x$ is fixed.  The IDT methods (i.e., relaxation but without step size rescaling)
give a rate of convergence that is reduced by one in some cases as indicated in Theorem \ref{orderthm1}.
With step size rescaling, in most cases the standard and relaxed RK methods give
similar accuracy.
\label{fig:burgersdissC}}
\end{figure}

%

\section{Conclusions}
The relaxation approach we have proposed seems to be a simple and effective way
to make any Runge--Kutta method preserve conservation or dissipativity with
respect to an inner-product norm.  This can be extended to more general convex
functionals; see \cite{paper2}.  While we have focused here on explicit
methods, the technique applies to implicit methods as well.
Like Runge--Kutta methods, relaxation Runge--Kutta methods automatically
preserve linear first integrals.  If the original method is equipped with an
embedded error estimator or dense output formula, these can be used without
modification (other than the rescaling of the step size when determining
the dense output times).  As we
have shown, many SSP RK methods retain the same SSP coefficient
when used as RRK methods.  The linear stability properties of a method are
slightly modified depending on the choice of $\gamma$, but in general the
allowable step size for an RRK method is essentially the same as that allowed
for the original RK method.

Together these properties make RRK methods an attractive choice for
symmetric hyperbolic systems; in combination with entropy-stable
spatial discretizations they give a fully-discrete, explicit scheme
that is provably entropy stable (for quadratic entropies).

In several experiments we have compared results obtained by viewing
$u_{\gamma}^{n+1}$ as an approximation to $u(t_n+\Dt)$ (the so-called
incremental direction technique, or IDT) versus those
obtained by viewing it as an approximation to $u(t_n +\gamma_n \Dt)$
(the relaxation approach).
The main purpose of this comparison is to illustrate our theoretical
convergence estimates; it is clear that in practice one should always
use the latter interpretation.  Comparison with embedded projection
methods is planned as future work.

For an RK method with $s$ stages, determination of $\gamma_n$ via the formula
\eqref{gammadefreal} requires the evaluation of $s+1$ inner products
(see Remark \ref{ipremark}).
For typical high-order discretizations of nonlinear PDEs this cost is
negligible compared to the $s$ evaluations of $f$ required for each step.
For simpler applications (such as linear wave equations) the cost may be an important factor.
A careful comparison of RRK schemes relative to other time discretizations
for linear wave equations is the subject of ongoing work.

\section{Acknowledgments}
The author is grateful to Hendrik Ranocha for helpful comments on drafts of this
work.  This work was supported by funding from King Abdullah University of Science \& Technology.

\bibliographystyle{plain}
\bibliography{refs}

\begin{thebibliography}{10}

\bibitem{albrecht1996}
Peter Albrecht.
\newblock {The Runge-Kutta Theory in a Nutshell}.
\newblock {\em SIAM Journal on Numerical Analysis}, 33(5):1712 -- 1735, 1996.

\bibitem{bogacki1996efficient}
P~Bogacki and Lawrence~F Shampine.
\newblock An efficient {R}unge-{K}utta (4, 5) pair.
\newblock {\em Computers \& Mathematics with Applications}, 32(6):15--28, 1996.

\bibitem{burrage1979stability}
Kevin Burrage and John~C Butcher.
\newblock Stability criteria for implicit {R}unge--{K}utta methods.
\newblock {\em SIAM Journal on Numerical Analysis}, 16(1):46--57, 1979.

\bibitem{calvo2010projection}
M~Calvo, MP~Laburta, JI~Montijano, and L~R{\'a}ndez.
\newblock Projection methods preserving {L}yapunov functions.
\newblock {\em BIT Numerical Mathematics}, 50(2):223--241, 2010.

\bibitem{calvo2006preservation}
Manuel Calvo, D~Hern{\'a}ndez-Abreu, Juan~I Montijano, and Luis R{\'a}ndez.
\newblock On the preservation of invariants by explicit {R}unge-{K}utta
  methods.
\newblock {\em SIAM Journal on Scientific Computing}, 28(3):868--885, 2006.

\bibitem{dahlquist2006}
Germund Dahlquist and Rolf Jeltsch.
\newblock {Reducibility and contractivity of Runge-Kutta methods revisited}.
\newblock {\em Bit Numerical Mathematics}, 46:567--587, 2006.

\bibitem{dekker1984}
K.~Dekker and J.~G. Verwer.
\newblock {\em {Stability of {R}unge-{K}utta methods for stiff nonlinear
  differential equations}}, volume~2 of {\em CWI Monographs}.
\newblock North-Holland Publishing Co., 1984.

\bibitem{delbuono2002explicit}
N~Del~Buono and C~Mastroserio.
\newblock Explicit methods based on a class of four stage fourth order
  {R}unge-{K}utta methods for preserving quadratic laws.
\newblock {\em Journal of Computational and Applied Mathematics},
  140(1-2):231--243, 2002.

\bibitem{ferracina2005}
Luca Ferracina and M.~N. Spijker.
\newblock {An extension and analysis of the Shu-Osher representation of
  Runge-Kutta methods}.
\newblock {\em Mathematics of Computation}, 249:201--219, 2005.

\bibitem{Grimm2005}
V.~Grimm and G.~R.~W. Quispel.
\newblock Geometric integration methods that preserve lyapunov functions.
\newblock {\em BIT Numerical Mathematics}, 45(4):709--723, Dec 2005.

\bibitem{hairer1980unconditionally}
Ernst Hairer.
\newblock Unconditionally stable explicit methods for parabolic equations.
\newblock {\em Numerische Mathematik}, 35(1):57--68, 1980.

\bibitem{hairer2006geometric}
Ernst Hairer, Christian Lubich, and Gerhard Wanner.
\newblock {\em Geometric numerical integration: structure-preserving algorithms
  for ordinary differential equations}, volume~31.
\newblock Springer Science \& Business Media, 2006.

\bibitem{HW1}
Ernst Hairer, Syvert~P. N\o~rsett, and G.~Wanner.
\newblock {\em {Solving ordinary differential equations \{I\}: Nonstiff
  Problems}}.
\newblock Springer Series in Computational Mathematics. Springer, Berlin,
  second edition, 1993.

\bibitem{higueras2005}
Inmaculada Higueras.
\newblock {Monotonicity for Runge-Kutta methods: inner product norms}.
\newblock {\em Journal of Scientific Computing}, 24:97--117, 2005.

\bibitem{ketcheson2008highly}
David~I Ketcheson.
\newblock Highly efficient strong stability-preserving {R}unge-{K}utta methods
  with low-storage implementations.
\newblock {\em SIAM Journal on Scientific Computing}, 30(4):2113--2136, 2008.

\bibitem{kraaijevanger1991}
J.~F. B.~M. Kraaijevanger.
\newblock {Contractivity of Runge-Kutta Methods}.
\newblock {\em BIT Numerical Mathematics}, 31:482--528, 1991.

\bibitem{laburta2015numerical}
MP~Laburta, Juan~I Montijano, Luis R{\'a}ndez, and Manuel Calvo.
\newblock Numerical methods for non conservative perturbations of conservative
  problems.
\newblock {\em Computer Physics Communications}, 187:72--82, 2015.

\bibitem{lozano2018entropy}
Carlos Lozano.
\newblock Entropy production by explicit {R}unge-{K}utta schemes.
\newblock {\em Journal of Scientific Computing}, 76(1):521--565, 2018.

\bibitem{ranocha2018strong}
Hendrik Ranocha.
\newblock On strong stability of explicit {R}unge-{K}utta methods for nonlinear
  semibounded operators, Nov. 2018.

\bibitem{ranocha_communication}
Hendrik Ranocha.
\newblock Private communication, 2019.

\bibitem{ranocha2018l_2}
Hendrik Ranocha and Philipp {\"O}ffner.
\newblock {$L_2$} stability of explicit {R}unge--{K}utta schemes.
\newblock {\em Journal of Scientific Computing}, pages 1--17, 2018.

\bibitem{paper2}
Hendrik Ranocha, Mohammed Sayyari, Lisandro Dalcin, Matteo Parsani, and
  David~I. Ketcheson.
\newblock Relaxation {R}unge--{K}utta methods: Fully-discrete explicit
  entropy-stable schemes for the {E}uler and {N}avier--{S}tokes equations, 05
  2019.
\newblock To appear on arXiv.

\bibitem{shu1988}
Chi-Wang Shu and Stanley Osher.
\newblock {Efficient implementation of essentially non-oscillatory
  shock-capturing schemes}.
\newblock {\em Journal of Computational Physics}, 77(2):439--471, August 1988.

\bibitem{sun2017}
Zheng Sun and Chi-Wang Shu.
\newblock Stability of the fourth order {R}unge--{K}utta method for
  time-dependent partial differential equations.
\newblock {\em Annals of Mathematical Sciences and Applications},
  2(2):255--284, 2017.

\bibitem{sun2018strong}
Zheng Sun and Chi-Wang Shu.
\newblock Strong stability of explicit {R}unge-{K}utta time discretizations.
\newblock {\em arXiv preprint arXiv:1811.10680}, 2018.

\bibitem{tadmor2003entropy}
Eitan Tadmor.
\newblock Entropy stability theory for difference approximations of nonlinear
  conservation laws and related time-dependent problems.
\newblock {\em Acta Numerica}, 12:451--512, 2003.

\bibitem{wambecq1978rational}
Albert Wambecq.
\newblock Rational {R}unge-{K}utta methods for solving systems of ordinary
  differential equations.
\newblock {\em Computing}, 20(4):333--342, 1978.

\end{thebibliography}

\end{document}